\theoremstyle{plain}
\newtheorem{theorem}{Theorem}[section]
\newtheorem{lemma}[theorem]{Lemma}
\newtheorem{proposition}[theorem]{Proposition}
\theoremstyle{definition}
\newtheorem{assumption}[theorem]{Assumption}
\newtheorem{definition}[theorem]{Definition}
\newtheorem{example}[theorem]{Example}
\newtheorem{remark}[theorem]{Remark}
\newcommand{\abs}[1]{\left\vert#1\right\vert}
\newcommand{\E}{{\mathbb{E}}}
\newcommand{\N}{{\mathbb{N}}}
\renewcommand{\P}{{\mathbb{P}}}
\newcommand{\R}{{\mathbb{R}}}
\newcommand{\calX}{\mathcal{X}}
\newcommand{\Qsmooth}{Q^\varepsilon}
\newcommand{\odecoeff}{g}
\newcommand{\xsol}{\kappa}
\definecolor{lightred}{rgb}{1.0,0.6,0.6}
\definecolor{darkgreen}{rgb}{0,0.5,0}
\definecolor{magenta}{rgb}{0.75,0,0.25}
\definecolor{violet}{rgb}{0.25,0,0.75}
\definecolor{col3}{rgb}{0,0.25,0.75}
\definecolor{col4}{rgb}{0,0.75,0.25}
\newcommand{\veps}{\varepsilon}
\newcommand{\sheavi}{h}
\newcommand{\1}{{\mathbf 1}}
\renewcommand{\P}{{\mathbb P}}
\newcommand{\cA}{{\cal A}}
\newcommand{\cB}{{\cal B}}
\newcommand{\cD}{{\cal D}}
\newcommand{\cE}{{\cal E}}
\newcommand{\cF}{{\cal F}}
\newcommand{\cH}{{\cal H}}
\newcommand{\be}{\begin{equation}}
\newcommand{\ee}{\end{equation}}
\newcommand{\bea}{\begin{eqnarray}}
\newcommand{\eea}{\end{eqnarray}}
\newcommand{\bsx}{{\boldsymbol{x}}}
\newcommand{\norm}[1]{\left\Vert#1\right\Vert}
\title{Approximation methods for piecewise deterministic Markov processes and their costs}
\author{Peter Kritzer\thanks{
P.~Kritzer is supported by the Austrian Science Fund (FWF): Project F5506-N26, which is part of the Special Research Program `Quasi-Monte Carlo Methods: Theory and Applications'.
P.~Kritzer is partially supported by the National Science Foundation (NSF) under Grant DMS-1638521 to the	
Statistical and Applied Mathematical Sciences Institute. Any opinions, findings, and conclusions or recommendations expressed	
in this material are those of the authors and do not necessarily reflect the views of the National Science Foundation.}\;\,\thanks{P. Kritzer, G. Leobacher, and M.~Sz\"olgyenyi gratefully acknowledge the partial support of the Erwin Schr\"odinger International Institute for Mathematics and Physics (ESI) in Vienna under the thematic programme `Tractability of High Dimensional Problems and Discrepancy'}
 \and Gunther Leobacher\thanks{G.~Leobacher is supported by the Austrian
Science Fund (FWF): Project F5508-N26, which is part of the Special Research
Program `Quasi-Monte Carlo Methods: Theory and Applications'. Part of this
article was written while G.~Leobacher was affiliated with the Institute of
Financial Mathematics and Applied Number Theory, Johannes Kepler University
Linz, Altenbergerstraße 69, 4040 Linz, Austria.  } \footnotemark[2]\and Michaela Sz\"olgyenyi\thanks{ M.~Sz\"olgyenyi is supported by the AXA Research Fund grant `Numerical Methods for Stochastic Differential Equations with Irregular Coefficients with Applications in Risk Theory and Mathematical Finance'.
A part of this article was written while M.~Sz\"olgyenyi was affiliated with the Institute of Statistics and Mathematics, Vienna University of Economics and Business, Welthandelsplatz 1, 1020 Vienna, Austria, and supported by the Vienna Science and Technology Fund (WWTF): Project MA14-031.
} \footnotemark[2] \and Stefan Thonhauser}
\begin{document}

\date{Preprint 2018}

\maketitle


\begin{abstract}%
In this paper, we analyse piecewise deterministic Markov processes, as introduced 
in \citet{Davis1984}. 
Many  models in insurance mathematics can be formulated in terms
of the general concept of piecewise deterministic Markov processes.
In this context, one is interested in computing certain quantities of interest such as the probability of ruin of an insurance company, or the insurance company's value, defined as the expected discounted future dividend payments until the time of ruin. 
Instead of explicitly solving the integro-(partial) differential 
equation related to the quantity of interest considered (an approach which can only be used in few special cases),
we adapt the problem in a manner that allows us to apply deterministic numerical integration algorithms such 
as quasi-Monte Carlo rules; this is in contrast to applying random integration algorithms such as Monte Carlo. 
To this end, we reformulate a general cost functional as a fixed point of a particular integral operator, which allows for iterative
approximation of the functional. Furthermore, we introduce a smoothing technique which is applied to the 
integrands involved, in order to use error bounds for deterministic cubature rules. 
On the analytical side, we prove a convergence result for our PDMP approximation,
which is of independent interest as it justifies phase-type approximations on the process level.
We illustrate the smoothing technique for a risk-theoretic example, and provide a
comparative study of deterministic and Monte Carlo
integration.\\

\noindent Keywords:  risk theory, piecewise deterministic Markov process,  quasi-Monte Carlo methods, phase-type approximations, dividend maximisation.\\
Mathematics Subject Classification (2010): 60J25, 91G60, 65D32. 
\end{abstract}

\section{Introduction}

Many models in risk theory can be formulated as piecewise deterministic Markov processes (PDMPs)---a general class of finite-variation sample path Markov processes introduced by \citet{Davis1984}. 
This applies, among others, to the classical Cram\'er-Lundberg model, the renewal risk models, and multi-portfolio models 
recently introduced by \citet{albrecherLautscham2015}.
Moreover, PDMPs are sufficiently general to allow for non-constant model parameters, i.e., quantities such as 
the hazard rate or the premium rate may be state dependent.
Examples of PDMPs and their control in the field of insurance mathematics are, e.g., \citet{DassiosEmbrechts1989},
 \citet{EmbrechtsSchmidli1994,schal,Rolski1999,Cai2009,LeoNgare,sz2016d}.
 
The general theory of PDMPs is well  developed, see for example the monographs by \citet{davis1993}, \citet{Jacobsen2006}, or
\citet{bauerle2011} for general results on PDMPs and their optimal control.
More specialised contributions to the control theory of PDMPs can be found in 
\citet{davis1993, lenhart1985integro, costa1989impulse, dempster1992necessary, almudevar2001dynamic, forwick2004piecewise,bauerle2010optimal,costa2013continuous}, or
\citet{farid1999} for viscosity solutions of associated Hamilton-Jacobi-Bellman equations, and \citet{sz2016c} for a general comparison principle for solutions to control problems for PDMPs.

For the numerical treatment of (control) problems for PDMPs, however, only problem-specific solutions have been provided.
A standard approach is to link expected values representing a quantity of interest in the problem to the solution of an associated
integro-(partial) differential equation, see, e.g.,  \citet{asmussen2010}. In only very few cases is it possible to 
derive an explicit solution to this integro-(partial) differential equation. Requiring an explicit solution typically restricts the  complexity of the model significantly. 
One possibility is to solve the integro-(partial) differential equation numerically. 
This carries all the intricacies and difficulties of a combined numerical method for differential and integral equations.
Alternatively one can apply crude Monte Carlo methods, see, e.g., \citet{Riedler}.
Those methods, while robust, are limited in speed by the Monte Carlo convergence rate.
Another---highly sophisticated---approach uses quantisation of the jump distribution, see \citet{Dufour2016}.

In this article we concentrate on particularly easy to implement methods similar to Monte Carlo.
The aim is to adapt the problem in a way that also allows for 
deterministic numerical integration algorithms such as quasi-Monte Carlo (QMC).
QMC has been applied successfully to problems in risk theory, see
\citet{Tichy1984,Lefevre2008,Siegl2000,AlbrecherKainhofer2002,preischl2018}.
It should be noted that the finiteness of  the total variation needed 
for the convergence estimate \cite[Theorem 1]{AlbrecherKainhofer2002} 
has not been proven.

We would like to highlight two features of our approach. Inspired by \cite{AlbrecherKainhofer2002}, 
we reformulate a general cost functional as a fixed point of a particular integral operator,
which allows for iterative approximation of the functional. In terms of numerical integration this means that we get a high-dimensional
integration problem of fixed dimension, where the dimension is a multiple of the number of iterations. Having a fixed  dimension is required 
for the application of standard QMC or other deterministic cubature rules. 

The application of QMC requires some degree of regularity of the integrand. Only in rare cases these will be satisfied automatically. 
The examples from risk theory considered here lead to non-smooth integrands. For these situations, we introduce a
smoothing technique which, in its simplest case, leads to $C^2$ integrands. From the earlier considerations, we obtain deterministic error bounds 
for those. We prove convergence in distribution of the ``smoothed processes'' to the original ones, which implies convergence of the 
corresponding expected values for every initial value of the process. In Section \ref{sec:examples} we even obtain uniform convergence 
with respect to the initial value in a particular setup from risk theory.

Our convergence result has an additional benefit for a typical situation in risk-theoretic modelling. In the literature on the analysis of ruin probabilities, or more generally, on  Gerber-
Shiu functions, the assumption of a claim size distribution of mixed exponential or phase-type form is quite common. Apart from the possibility to obtain explicit expressions for quantities of interest 
in such setups, this modelling approach is motivated by the fact that the class of phase-type distributions is dense in the class of distributions with support on $[0,\infty)$, see 
\cite[Theorem 8.2.3]{Rolski1999}. Under mild assumptions on the claim size distribution we want to approximate, our convergence result applies 
and justifies the phase-type approximation procedure even on the process level.
Furthermore, efficient and easy to implement numerical methods for the computation of important targets such as Gerber-Shiu functions and expected discounted future dividend payments of an insurance 
company are of particular importance when models become more general and hence also more complicated.
This makes our contribution valuable from both the analytical and the numerical point of view.

We would like to emphasize that the methods presented here per se do not provide solutions to optimal 
control problems, which is the main application of PDMPs in risk theory. However, the integration algorithms as introduced here can be used 
in a policy iteration procedure for calculating costs associated with a fixed policy.

The paper is structured as follows. In Section \ref{sec:PDMPs} we recall the definition of a PDMP and provide some risk-theoretic examples.
In Section \ref{sec:fixed-point-approach} we derive the fixed point approach for valuation of a cost functional of a PDMP.
Section \ref{sec:cubature} reviews deterministic numerical integration of possibly multivariate $C^k$ functions. 
Subsequently, Section \ref{sec:smoothing} is devoted to the aforementioned smoothing procedure, and presents a stability result.
Section \ref{sec:DE} contains an application of the smoothing to one of the risk-theoretic examples and a comparative study of deterministic and Monte Carlo integration for this example. \\

\section{Piecewise deterministic Markov processes}\label{sec:PDMPs}

In this section we first define piecewise deterministic Markov processes. Then we give a couple of examples of practical interest.

A PDMP is a continuous-time stochastic process with (possibly random) jumps,
which follows a deterministic flow, e.g., the solution of an ordinary
differential equation, between jump times.  
We will not give the most general
definition of PDMPs here, but instead 
refer to the monograph by \citet{davis1993}.
For a subset $A$ of $\R^d$ we denote by $A^\circ,\bar A$, and $\partial A$ its
interior, closure, and boundary, respectively.
We write $\cB(A)$ for the Borel $\sigma$-algebra on $A$.

\begin{definition}
Let $A\subseteq \R^d$.
A function $\phi\colon A\times \R\to \R^d$ is called a {\em flow} on $A$, if
\begin{itemize}
\item  $\phi$ is
continuous, 
\item $\phi(x,0)=x$ for all $x\in A$;
\item for all $x\in A$ and all $s,\,t\in\R$ it holds that 
if $\phi(x,t)\in A$ and $\phi(\phi(x,t),s)\in A$ 
then $\phi(x,t+s)=\phi(\phi(x,t),s)$.
\end{itemize}
For fixed $x\in A$, let $\phi^{-1}(x,A)=\{t\in\R\colon \phi(x,t)\in A\}$. 
Then the function $\phi(x,\cdot)\colon \phi^{-1}(x,A) \to A$ is called a {\em trajectory} of the flow.
\end{definition}

If $\phi$ is a flow on $A$, then we write 
$\partial^-_\phi A=\{x\in \partial A\colon \exists \varepsilon\in(0,\infty)\ \mbox{such that}\
\forall t\in (0,\varepsilon)\colon  \phi(x,t)\in A^\circ\}$ and 
$\partial^+_\phi A=\{x\in \partial A\colon \exists \varepsilon\in(0,\infty)\ \mbox{such that}\ 
\forall t\in (0,\varepsilon)\colon  \phi(x,-t)\in A^\circ\}$.  

Thus $\partial^-_\phi A$ consists of the points on the boundary of $A$ 
from which the
trajectory moves into $A^\circ$ immediately, and $\partial^+_\phi A$ 
consists of the points on the boundary of $A$ 
to which a 
trajectory moves from $A^\circ$ without passing other points on the
boundary in-between.  Furthermore, we write $\partial^1_\phi A\colon
=\partial^-_\phi A\backslash \partial^+_\phi A$.

\begin{remark}
The classical example of a flow arises through ordinary differential equations
(ODEs). Let $\odecoeff:\R^d\to \R^d$ be Lipschitz continuous. 
By the classical Picard-Lindelöf theorem on existence and uniqueness 
of solutions of ODEs we have that for every $x\in
\R$ there exists a continuously 
differentiable function 
$\xsol:\R\to \R^d$ such that $\xsol(0)=x$ and $\xsol'(s)=\odecoeff(\xsol(s))$ for 
all $s\in \R$. For $t\in \R$ we define $\phi(x,t)=\xsol(t)$. The function
$\phi$ defines a flow on $\R^d$. If $A\subseteq \R^d$, then the restriction 
of $\phi$ to $A\times \R$ is a flow on $A$.
\end{remark}

\begin{definition}\label{def:PDMP-data}
Let $K$ be a finite set and let $d\colon K\to\N$ be a function which satisfies that, for every $k\in K$, 
$E_k\subseteq\R^{d(k)}$ and $\phi_k$ is a flow on $E_k$ with $E_k=E_k^\circ\cup \partial^1_{\phi_k} E_k$.
\begin{enumerate}[(i)]
\item The \emph{state space} $(E,\cE)$ of a PDMP is the measurable space 
defined by $E=\bigcup_{k\in K}(\{k\}\times E_k)$ and $\cE=\sigma(\{\{k\}\times B\colon  k\in K, B\in \cB(E_k)\})$.
\item The \emph{flow} of a PDMP is defined by $\phi=\{\phi_k\}_{k\in K}$.
\item The {\em active boundary} of the PDMP is defined by
$\Gamma^\ast =\bigcup_{k=1}^K \partial^+_{\phi_k} E_k$. Furthermore, we define a 
$\sigma$-algebra on $E\cup \Gamma^*$ by
$\cE^\ast =\sigma(\{\{k\}\times B\colon  k\in K, B\in \cB(E_k\cup \partial^+_{\phi_k} E_k)\})$.
\item The \emph{jump intensity} $\lambda$ of a PDMP is defined by a family of functions $\lambda=\{\lambda_k\}_{k\in K}$ with
$\lambda_k\colon E_k\to [0,\infty)$ measurable and bounded for all $k\in K$.
\item The {\em jump kernel} $Q$ of a PDMP is a function $Q\colon \mathcal{E}\times (E\cup\Gamma^\ast )\to[0,1]$ such that 
 $Q(A,\cdot)$ is $\mathcal{E}^\ast $-$\cB([0,1])$
measurable for every $A\in\mathcal{E}$, and $Q(\cdot,x)$ is a 
probability measure
on $(E,\mathcal{E})$ for every $x\in E$ with $Q(\{x\},x)=0$.
\end{enumerate}
We call the triple $(\phi,\lambda,Q)$ the {\em local characteristics}
of a PDMP.
\end{definition}

Given a state space $(E,\cE)$ and local characteristics $(\phi,\lambda,Q)$ of a PDMP we define
the function
$t^\ast \colon E\to [0,\infty]$ by 
\begin{align*}
t^\ast (k,y)&=\begin{cases}
\inf\{t>0\,\colon \,\phi_k(y,t)\in\partial^+_{\phi_k} E_k\} & \text{ if } \exists 
t>0\colon \phi_k(y,t)\in\partial^+_{\phi_k} E_k,\\
\infty & \text{ otherwise.}
\end{cases}
\end{align*}

\begin{definition}\label{def:PDMP}
Let $(E,\cE)$ be a state space and let $(\phi,\lambda,Q)$ be local
characteristics  of a PDMP, let $x\in E$, and let $(\Omega,\cF,\P)$ be a probability
space.  A \emph{piecewise deterministic Markov process} starting in $x$
is a stochastic process $X\colon[0,\infty)\times\Omega\to E$ which satisfies the following. 
There exists a sequence of random variables $(T_n)_{n\in\N}$ 
with $T_n\in [0,\infty]$ and $T_n\le T_{n+1}$ a.s.~and $\lim_{n\to \infty} T_n=\infty$ a.s.~for all $n\in\N$ such that
\begin{enumerate}[(i)]
\item it holds $\P$-a.s.~that $X_0=x$,
\item for all $n\in\N$, $t\in [T_n,T_{n+1})$, and for $(k,y)\in E$ with $X_{T_n}=(k,y)$ it holds $\P$-a.s.~that
$X_t=\phi_k(y,t-T_n)$, 
\item for all $s,t\in [0,\infty)$ it holds $\P$-a.s.~that
\[
\P\big(T_{n+1}-T_n>t|X_s=(k,y)\text{ and } T_n\le s<T_{n+1}\big)=
\begin{cases}
e^{-\int_0^t\lambda_k(\phi_k(y,\tau))d\tau }&\mbox{if}\ 0<t<t^\ast (k,y),\\
0&\mbox{if}\ t\geq t^\ast (k,y),
\end{cases}
\]
\item for all $n\in\N$ and all $A\in \cE$ it holds $\P$-a.s.~that
\[
\P\big(X_{T_{n+1}}\in A|X_{T_n-}\big)=Q(A,X_{T_n}).
\]
\end{enumerate}
\end{definition}

\begin{theorem}\label{th:existence-pdmp}
Let $(E,\cE)$ be a state space and let $(\phi,\lambda,Q)$ be local characteristics  of a PDMP, let $x\in E$.
There exist a probability space $(\Omega,\cF,\P_{x})$  
and a stochastic process
$X\colon[0,\infty)\times\Omega\to E$ such that
$X$ is a PDMP starting in $x$ with state space $E$ and local characteristics $(\phi,\lambda,Q)$. 
Furthermore, $X$ has the strong Markov property.
\end{theorem}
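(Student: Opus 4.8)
The plan is to establish Theorem~\ref{th:existence-pdmp} by the classical iterative (``interlacing'') construction of a PDMP from its local characteristics, essentially as in \citet[Section~25]{davis1993}; the work is to check that the stated hypotheses suffice to run that construction, to verify the four conditions of Definition~\ref{def:PDMP}, and to deduce the strong Markov property. First I would fix the driving randomness: take for $(\Omega,\cF,\P_x)$ a space carrying two independent sequences $(U_n)_{n\in\N}$ and $(V_n)_{n\in\N}$ of i.i.d.\ random variables uniform on $[0,1]$, the $U_n$ to be used for the jump times and the $V_n$ for the post-jump states. Since each $E_k$ is a Borel subset of $\R^{d(k)}$, the spaces $(E,\cE)$ and $(E\cup\Gamma^\ast,\cE^\ast)$ are Borel, so the kernel $Q$ can be realised through a jointly measurable map $\psi\colon(E\cup\Gamma^\ast)\times[0,1]\to E$ with $Q(\cdot,z)=\P(\psi(z,V_1)\in\cdot\,)$ for every $z$; I fix one such $\psi$.

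Next I would build the process recursively. Put $T_0=0$, $X_0=x$. Given that $T_n<\infty$ and $X_{T_n}=(k,y)\in E$ have been constructed, let
\[
G_{k,y}(t)=\exp\!\Big(-\int_0^t\lambda_k(\phi_k(y,\tau))\,d\tau\Big)\,\1_{\{t<t^\ast(k,y)\}},
\]
which is right-continuous and non-increasing with $G_{k,y}(0)=1$; set $S_{n+1}=\inf\{t\ge0\colon G_{k,y}(t)\le U_{n+1}\}$, $T_{n+1}=T_n+S_{n+1}$, and $X_t=\phi_k(y,t-T_n)$ for $t\in[T_n,T_{n+1})$. If $T_{n+1}<\infty$, the left limit $X_{T_{n+1}-}=\phi_k(y,S_{n+1})$ lies in $E_k$ when $S_{n+1}<t^\ast(k,y)$ and on $\partial^+_{\phi_k}E_k$, i.e.\ in the active boundary $\Gamma^\ast$, when $S_{n+1}=t^\ast(k,y)$ (this is exactly why $Q$ must be defined on $E\cup\Gamma^\ast$), and I put $X_{T_{n+1}}=\psi(X_{T_{n+1}-},V_{n+1})$.

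Then I would verify the required properties. Conditions (i) and (ii) of Definition~\ref{def:PDMP} hold by construction. Condition (iii) follows from the inverse-transform identity $\P(S_{n+1}>t\mid X_{T_n}=(k,y))=G_{k,y}(t)$; the conditioning on $\{X_s=(k,y),\,T_n\le s<T_{n+1}\}$ is reduced to this via the flow identity $\phi_k(\phi_k(y,s-T_n),\tau)=\phi_k(y,\tau+s-T_n)$, under which the residual distribution of $S_{n+1}$ beyond $s-T_n$ is again of the form $G_{k,\phi_k(y,s-T_n)}$. Condition (iv) is immediate from $X_{T_{n+1}}=\psi(X_{T_{n+1}-},V_{n+1})$, since $X_{T_{n+1}-}$ is measurable with respect to the data generated strictly before $T_{n+1}$ while $V_{n+1}$ is independent of it. The simple Markov property with respect to the natural filtration $(\cF_t)_{t\ge0}$ then follows because $(X_{T_n},T_n)_{n\in\N}$ is a time-homogeneous Markov chain and, on $\{T_n\le t<T_{n+1}\}$, the post-$t$ trajectory is obtained from $X_t$ by the same recipe with a residual waiting time of the ``memoryless'' form $G$ started at $X_t$. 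For the strong Markov property I would approximate an $(\cF_t)$-stopping time $\tau$ from above by stopping times $\tau_m\downarrow\tau$ taking only countably many values, apply the simple Markov property at each value of each $\tau_m$, and pass to the limit using right-continuity of the paths.

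I expect the delicate points to be, first, showing $T_n\to\infty$ $\P_x$-a.s.\ (non-explosion): boundedness of the $\lambda_k$ dominates the number of spontaneous jumps in a bounded interval by a Poisson count, but one must also exclude an accumulation of \emph{forced} jumps at the active boundary, and this is where the standing assumption $E_k=E_k^\circ\cup\partial^1_{\phi_k}E_k$ — a trajectory re-entering $E_k$ cannot instantaneously sit on $\partial^+_{\phi_k}E_k$ — is the relevant ingredient; in the fully general theory this is precisely the spot where an explicit non-explosion hypothesis is imposed. The second somewhat technical point is the bookkeeping for condition (iii), making the conditional law consistent under the conditioning on $\{T_n\le s<T_{n+1}\}$. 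The measurable selection $\psi$ and the verification of (i), (ii), (iv), as well as the approximation argument for the strong Markov property, are routine.
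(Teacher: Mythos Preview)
Your sketch is correct and is precisely the iterative Hilbert-cube construction of \citet[Section~25]{davis1993} that the paper invokes; the paper's own proof is nothing more than a one-line citation to Davis, so your proposal simply unpacks what the reference contains. Your flag on non-explosion is well placed: Definition~\ref{def:PDMP} builds $\lim_{n\to\infty}T_n=\infty$ a.s.\ into the notion of a PDMP, so Theorem~\ref{th:existence-pdmp} as stated presupposes that the construction does not explode, whereas Davis's general framework explicitly allows explosions and treats non-explosion as a separate standing assumption---the paper's proof acknowledges this by saying Davis covers ``a more general setup that also allows for the possibility of explosions.'' Boundedness of the $\lambda_k$ controls the stochastic jumps, but, as you note, it does not by itself rule out accumulation of forced boundary jumps; strictly speaking the theorem therefore relies on Davis's standard non-explosion hypothesis rather than on anything derivable from Definition~\ref{def:PDMP-data} alone.
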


\begin{proof}
The proof of Theorem \ref{th:existence-pdmp} for a more general setup
that also allows for the possibility of explosions and countable $K$ 
can be found in \cite[Section 2.25]{davis1993}.
\end{proof}

Figure \ref{fig:PDMP} illustrates a path of a PDMP. \\
\begin{figure}
\begin{center}
\input{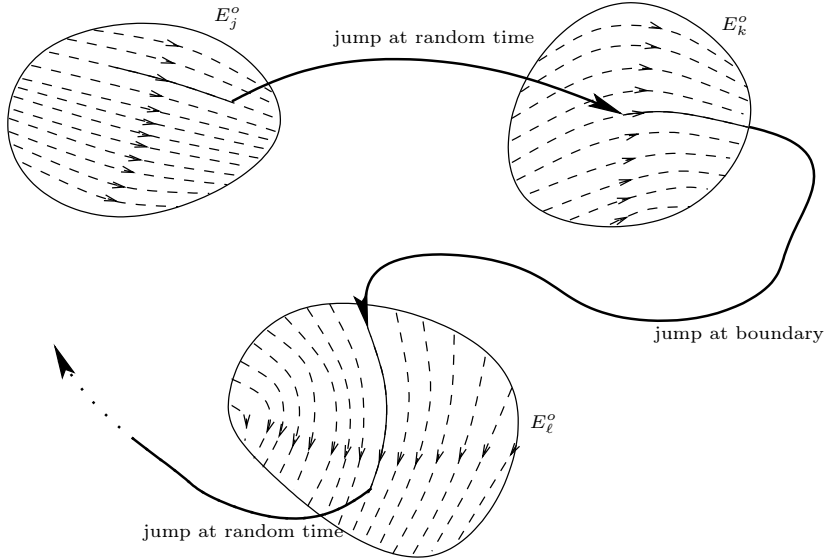}
\end{center}
\caption{Illustration of a PDMP.}\label{fig:PDMP}
\end{figure}

Let $f\colon E\to\R$ be a function.
For all $k\in K$ we denote by $f_k$ the function $f_k\colon E_k\to \R$ which satisfies for all $x\in E_k$ that $f_k(x)= f(k,x)$.
It is not hard to see that $f$ is measurable if and only if $f_k$ is measurable
for every $k\in K$.
We say that $f$ is $n$-times continuously differentiable, if
for every $k\in K$  there exists 
an open set $A_k\subseteq\R^{d(k)}$ with $E_k\subseteq A_k$ and an 
$n$-times continuously differentiable function
$\hat f_k:A_k\to \R$ such that $f_k=\hat f_k|_{E_k}$.
We write $C^n(E,\R^m)$ for the space of $n$-times differentiable functions on 
$E$ and $C^n_b(E,\R^m)$ for the space of functions in $C^n(E,\R^m)$ for which all
derivatives are bounded. Moreover, $C^n_0(E,\R^m)$ is the space of functions  
in $C^n_b(E,\R^m)$ for which all derivatives vanish at infinity.

Further, for $f\colon E\to\R$, a PDMP $X$, and
$t\in(0,\infty)$ we write $\E(f(X_t)|X_0=x)=:\E_x(f(X_t))$.

In the remainder of this section we provide some illustrative examples from risk theory.
For other examples and applications in different fields we refer to \citet{davis1993,Dufour2012,Riedler}.

\subsection{Examples} \label{sec:examples}
\subsubsection{Classical Cram\'er-Lundberg model}
\label{subsubsec:C-L}
Let $X=(X_t)_{t\geq 0}$ be a stochastic process given by
\begin{align}\label{eq:Surplus}
X_t=x+c\,t-S_t,\quad t\geq 0,
\end{align}
where $x,\,c\,\geq 0$, $N=(N_t)_{t\geq 0}$ is a homogeneous Poisson process with intensity $\lambda_N>0$,
$\{Y_i\}_{i\in\N}$ is a family of positive i.i.d.~random variables with distribution function $F_Y$, and $S_t=\sum_{i=1}^{N_t}Y_i$ for all $t\ge 0$.
A usual assumption in this kind of model is the independence of $\{Y_i\}_{i\in\N}$ and $N$.
In risk theory the process $X$ represents a standard model for the surplus of an insurance portfolio.
A quantity of interest is the probability of $X$ ever becoming negative,
i.e., we are interested in 
\(
\P(\tau<\infty)\,,
\)
where $\tau=\inf\{t\ge 0: X_t<0\}$.
The model translates into a PDMP  via
\begin{itemize}
\setlength{\itemsep}{0em}
\item $K=\{1,2\}$,
\item  $E_1=[0,\infty)$, $E_2=(-\infty,0)$, 
\item $\phi_1(y,t)=y+ct$ 
$\forall y\in E_1$ and $\forall t\in \R$, 
$\phi_2(y,t)=y$ $\forall y\in E_2$ and $\forall t\in \R$, 
\item $\lambda_1(y)=\lambda_N$ $\forall y\in E_1$, 
$\lambda_2(y)=0$ $\forall y\in E_2$.
\item For $B_1\in \cB (E_1)$, $B_2\in \cB (E_2)$, and  $B=(\{1\}\times B_1)\cup(\{2\}\times B_2)$, 
\begin{align*}
Q(B,(1,y))=\P(Y\in y-B_1)+\P(Y\in y-B_2)
\end{align*}
for $y\in E_1$, and 
$Q(B,(2,y))=\P(Y\in y-B_2)$,
\end{itemize}
where we have used the notation $y-B=\{y-y' \colon y'\in B\}$
for all  $y\in \R$ and $B\in\mathcal{B}(\R)$.
For $y\in E_2$, any definition for $Q$ will do, since the jump intensity 
is 0 there, but the above definition is provided for definiteness.

\subsubsection{Cram\'er-Lundberg model with dividend payments}
\label{subsubsec:C-L-div}
A classical modification of the model from Section \ref{subsubsec:C-L} is the introduction of a dividend barrier at level $b>0$.
Then, once the surplus reaches the barrier, the incoming premium rate is immediately distributed as a dividend.
Furthermore, if the process starts above $b$, the excess is distributed as a lump sum dividend, such that
$X_{0+}=\min\{x,b\}$.
A typical quantity of interest is the expected value of discounted future dividend payouts until ruin of the company,
which is given by 
\begin{equation}\label{eq:val-barrier}
V(x )=
\begin{cases}
\E_{x} \left(\int_0^\tau e^{-\delta t} c \1_{\{X_t = b\}}dt\right)&\mbox{if}\ x  \le b,\\
x -b+ \E_b\left(\int_0^\tau e^{-\delta t} c \1_{\{X_t = b\}}dt\right)&\mbox{if}\ x  > b,
\end{cases}
\end{equation}
where $\delta >0$ is a preference-based discount factor and $\tau=\inf\{t\geq 0 \colon X_t<0\}$.
The model translates into a PDMP via
\begin{itemize}
\setlength{\itemsep}{0em}
\item $K=\{1,2,3\}$, 
\item $E_1=[0,b)$, $E_2=(-\infty,0)$, $E_3=\{b\}$, 
\item $\phi_1(y,t)=y+ct$ 
$\forall y\in E_1$ and $\forall t\in \R$, 
$\phi_2(y,t)=y$ $\forall y\in E_2$ and $\forall t\in \R$, 
$\phi_3(y,t)=y$ $\forall y\in E_3$ and $\forall t\in\R$, 
\item $\lambda_1(y)=\lambda_N$ $\forall y\in E_1$, 
$\lambda_2(y)=0$ $\forall y\in E_2$, 
$\lambda_3(y)=\lambda_N$ $\forall y\in E_3$. 
\item For $B_k\in \cB (E_k)$, $1\le k\le 3$, and $B=(\{1\}\times B_1)\cup(\{2\}\times B_2)\cup(\{3\}\times B_3)$, 
\begin{align*}
Q(B,(1,y))=\P(Y\in y-B_1)+\P(Y\in y-B_2)
\end{align*}
for $y\in E_1$,  
$Q(B,(2,y))=\P(Y\in y-B_2)$ 
for $y\in E_2$, and
\begin{align*}
Q(B,(3,y))=\P(Y\in y-B_1)+\P(Y\in y-B_2)
\end{align*}
for $y\in E_3$. Finally, $Q(B,(1,y))=\1_{B_3}(y)(3,y)$ for 
$y\in \partial^1_{\phi_1}E_1=\{b\}$.
\end{itemize}
Note that only initial values $x\in (-\infty,b]$ translate 
to a viable initial value for the PDMP. However, this is sufficient for 
determining $V(x)$ for all $x\in \R$ via \eqref{eq:val-barrier}.\\

\subsubsection{Cram\'er-Lundberg model with time dependent dividend barrier}
\label{subsubsec:C-L-div-timedep}
In \citet{AlbrecherKainhofer2002} the model from Section \ref{subsubsec:C-L-div} is further extended to include a time dependent barrier $b\colon[0,\infty)\to|0,\infty)$ of the form
\begin{align*}
b(t)=\left(b_0^m+\frac{t}{\alpha}\right)^{\frac 1 m},
\end{align*}
where $ \alpha,b_0>0$, $m>1$.
The quantity of interest is again the expected value of discounted future dividend payments until the time of ruin, i.e.,
\begin{align*}
V(x)=\E_x\left(\int_0^\tau e^{-\delta t} (c-b_t)\1_{\{X_t=b_t\}}dt\right),
\end{align*}
for $x\le b_0$, where again $\tau=\inf\{t\geq 0 \colon X_t<0\}$ and $\delta >0$ is a preference-based discount factor.
The model translates into a PDMP via
\begin{itemize}
\setlength{\itemsep}{0em}
\item $K=\{1,2,3\}$, 
\item $E_1=\{(s,y)\in \R^2:0\le y< b(s)\}$, $E_2=\{(s,y)\in \R^2: y<0 \}$, $E_3=\{(s,y)\in \R^2:y= b(s)\}$, 
\item $\phi_1((s,y),t)=(s+t,y+ct)$ 
$\forall (s,y)\in E_1$ and $\forall t\in \R$, 
$\phi_2((s,y),t)=(s+t,y)$ $\forall y\in E_2$ and $\forall t\in\R$, 
$\phi_3((s,y),t)=(s+t,b(s+t))$ $\forall (s,y)\in E_3$ and $\forall t\in\R$, 
\item $\lambda_1(y)=\lambda_N$ $\forall y\in E_1$, 
$\lambda_2(y)=0$ $\forall y\in E_2$, 
$\lambda_3(y)=\lambda_N$ $\forall y\in E_3$.
\item For $B_k\in \cB (E_k)$, $1\le k\le 3$, and $B=(\{1\}\times B_1)\cup(\{2\}\times B_2)\cup(\{3\}\times B_3)$, 
\begin{align*}
Q(B,(1,(s,y)))=\P(Y\in y-(\{s\}\times \R)\cap B_1)+\P(Y\in y-(\{s\}\times \R)\cap B_2)
\end{align*}
for $(s,y)\in E_1$,  
$Q(B,(2,(s,y)))=\P(Y\in y-(\{s\}\times \R)\cap B_2)$
for $(s,y)\in E_2$, and
\begin{align*}
Q(B,(3,(s,y)))=\P(Y\in y-(\{s\}\times \R)\cap B_1)+\P(Y\in y-(\{s\}\times \R)\cap B_2)
\end{align*}
for $(s,y)\in E_3$. Finally, $Q(B,(1,(s,y)))=\1_{B_3}((s,y))(3,(s,y))$ for 
$(s,y)\in \partial^1_{\phi_1}E_1=E_3$.
\end{itemize}

\subsubsection{Cram\'er-Lundberg model with loan}\label{subsubsec:CL-loan}
In \citet{DassiosEmbrechts1989} the model from Section \ref{subsubsec:C-L-div} is modified such that
the insurance company is not ruined when the surplus hits zero, but has the possibility to take up a loan at an interest rate $\rho>0$.
The time of ruin is given by $\tau=\inf\{t\geq 0 \colon X_t < -c/\rho\}$.
The corresponding quantity of interest is
\begin{align*}
V(x)=\E_x\left(\int_0^\tau e^{-\delta t} c \1_{\{X_t = b\}}dt\right),
\end{align*}
for $x\le b$, where $\delta >0$ is a preference-based discount factor.
The model translates into a PDMP via
\begin{itemize}
\setlength{\itemsep}{0em}
\item $K=\{1,2,3,4,5\}$, 
\item $E_1=[0,b)$, $E_2=(-\frac{c}{\rho},0)$, $E_3=\{b\}$, $E_4=(-\infty,-\frac{c}{\rho})$, $E_5=\{-\frac{c}{\rho}\}$,
\item $\phi_1(y,t)=y+ct$ 
$\forall y\in E_1$ and $\forall t\in \R$, 
$\phi_2(y,t)=y$ $\forall y\in E_2$ and $\forall t\in\R$, 
$\phi_3$ is the flow of the ODE $z'=c+\rho z$ at $(y,t)$ 
$\forall y\in E_3$ and $\forall t\in \R$, 
$\phi_4(y,t)=y$ $\forall y\in E_4$ and  $\forall t\in \R$, 
$\phi_5(y,t)=y$ $\forall y\in E_5$ and $\forall t\in\R$, 
\item $\lambda_1(y)=\lambda_N$ $\forall y\in E_1$, 
$\lambda_2(y)=\lambda_N$ $\forall y\in E_2$, 
$\lambda_3(y)=\lambda_N$ $\forall y\in E_3$, 
$\lambda_4(y)=0$ $\forall y\in E_4$, 
$\lambda_5(y)=0$ $\forall y\in E_5$. 
\item For $B_k\in \cB (E_k)$, $1\le k\le 5$, and 
$B=\bigcup_{k=1}^5(\{k\}\times B_k)$, 
\begin{align*}
Q(B,(1,y))=\P(Y\in y-B_1)+\P(Y\in y-B_2)+\P(Y\in y-B_4)
\end{align*}
for $y\in E_1$,  
$Q(B,(2,y))=\P(Y\in y-B_2)+\P(Y\in y-B_4)$ 
for $y\in E_2$, and
\begin{align*}
Q(B,(3,y))=\P(Y\in y-B_1)+\P(Y\in y-B_2)
\end{align*}
for $y\in E_3$. Finally, $Q(B,(1,y))=\1_{B_3}(y)(3,y)$ for 
$y\in \partial^1_{\phi_1}E_1=\{b\}$, and $Q(B,(2,y))=\1_{B_2}(y)(1,y)$ for 
$y\in \partial^1_{\phi_2}E_2=\{0\}$.\\
\end{itemize}

\subsubsection{Multidimensional Cram\'er-Lundberg model}\label{subsubsec:C-L-multidim}
In \citet{albrecherLautscham2015} a two-dimensional
extension of the model in Section \ref{subsubsec:C-L-div} is studied.
The basis are independent surplus processes modelling two insurance portfolios 
$X^{(j)}_t=x^{(j)}+c^{(j)} t -S^{(j)}_t$, $j\in\{1,2\}$,
where $c^{(1)},c^{(2)}\ge 0$ and $S^{(j)}$ are compound Poisson processes with 
intensities $\lambda^{(1)},\,\lambda^{(2)}$ and jump size distributions
${F}_{Y^{(1)}},\,{F}_{Y^{(2)}}$.
Furthermore,  $b^{(1)},b^{(2)}\ge 0$ are barriers. As a new feature, the drift of the
component at the barrier is added to the other component's drift,
causing faster growth of the latter. Dividends are only paid when 
both surplus processes have reached their individual barriers.
We show how the model translates into a PDMP, namely
\begin{align*}
E_1&=\{(x^{(1)},x^{(2)})\in\R^2\,:\,0\leq x^{(1)}<b^{(1)},\,0\leq x^{(2)}<b^{(2)}\},\\
E_2&=\{(x^{(1)},x^{(2)})\in\R^2\,:\, b^{(1)}= x^{(1)},\,0\leq x^{(2)}<b^{(2)}\},\\
E_3&=\{(x^{(1)},x^{(2)})\in\R^2\,:\,0\leq x^{(1)}<b^{(1)},\,b^{(2)}=x^{(2)}\},\\
E_4&=\{(x^{(1)},x^{(2)})\in\R^2\,:\,b^{(1)}= x^{(1)},\,b^{(2)}= x^{(2)}\},\\
E_5&=\R^2\setminus (E_1\cup E_2\cup E_3\cup E_4).
\end{align*}
The flow is given by
\begin{align*}
\phi_1(x,t)=x+\begin{pmatrix}
c^{(1)}\\
c^{(2)}
\end{pmatrix}t\,,\qquad
\phi_2(x,t)=x+\begin{pmatrix}
0\\
c^{(1)}+c^{(2)}
\end{pmatrix}t\,,\qquad
\phi_3(x,t)=x+\begin{pmatrix}
c^{(1)}+c^{(2)}
\\0
\end{pmatrix}t\,,
\end{align*}
and $\phi_4(x,t)=\phi_5(x,t)=x$ for all $x\in \R^2$, $t\ge0$.
It remains to describe the jump behaviour. We get deterministic `jumps' 
at the active boundaries of $E_1,E_2,E_3$ which do not manifest themselves
as jumps of the process, i.e., $Q(A,(1,x))=\1_A((2,x))$ for 
$(1,x)\in \partial^1_{\phi_1}(E_1)$ and similar for the other active boundaries.
Since each surplus process is a compound Poisson process with drift, 
jumps in the
components occur due to realisations of independent identically distributed exponential
random variables (independence implies that mutual jumps occur with probability zero).
The two-dimensional process thus jumps at the minimum of the individual
jump times. This means that we have a constant jump intensity 
 $\lambda_k=\lambda^{(1)}+\lambda^{(2)}$ for $k=1,2,3,4$,  and 
$\lambda_5=0$.
If a jump occurs at time $t\ge0$, it happens with
probability $\frac{\lambda^{(1)}}{\lambda^{(1)}+\lambda^{(2)}}$ in the first surplus process with
jump size distribution ${F}_{Y^{(1)}}$, and with probability
$\frac{\lambda^{(2)}}{\lambda^{(1)}+\lambda^{(2)}}$ in the second surplus process with jump size
distribution ${F}_{Y^{(2)}}$. It remains to  
describe the jump kernel for the jumps from $x\in E$. 
To this end define, for $k_1,k_2\in\{1,2,3,4\}$ and 
$B\in\mathcal{B}(E_{k_2})\subseteq\mathcal{B}(\R^2)$, and 
$(y^{(1)},y^{(2)})\in E_{k_1}$, 
\begin{align*}
B^{(1)}&=\{z^{(1)}\in \R\colon(z^{(1)},z^{(2)})\in B, z^{(2)}=y^{(2)}\}\, ,\\
B^{(2)}&=\{z^{(2)}\in \R\colon(z^{(1)},z^{(2)})\in B, z^{(1)}=y^{(1)}\}\,.
\end{align*}
Furthermore,
\begin{align*}
Q(\{k_2\}\times B,(k_1,y^{(1)},y^{(2)}))
=\frac{\lambda^{(1)}}{\lambda^{(1)}+\lambda^{(2)}}{F}_{Y^{(1)}}(y^{(1)}-B^{(1)})
+\frac{\lambda^{(2)}}{\lambda^{(1)}+\lambda^{(2)}}{F}_{Y^{(2)}}(y^{(2)}-B^{(2)})\,.
\end{align*}

A quantity of interest in this model is again the expected value of discounted future dividend payments until the time of ruin of one of the portfolios,
\begin{align}\label{eq:costfunction}
V(x^{(1)},x^{(2)})=\E_{x^{(1)},x^{(2)}}\left(\int_0^\tau e^{-\delta t}(c^{(1)}+c^{(2)})\1_{E_4}(X^{(1)}_t,X^{(2)}_t)\,dt\right),
\end{align}
for $x^{(1)}\le b^{(1)}$, $x^{(2)}\le b^{(2)}$, with $\tau=\inf\{t\ge 0\colon  (X^{(1)}_t,X^{(2)}_t)\in E_5\}$, 
and $\delta >0$ being a preference-based discount factor.

\section{Iterated integrals and a fixed point approach}\label{sec:fixed-point-approach}
In this section we derive a method for numerical approximation of the quantities of interest appearing in the models introduced in the previous section.
We rewrite the quantity of interest as a sum of integrals with fixed dimension and an error term that goes to zero exponentially fast
with increasing dimension of the integral. This allows for the use of deterministic integration rules. 
The starting point for the derivation of this integral
representation is the observation that the quantity of interest is a fixed point of a certain integral operator associated to the PDMP.

\begin{definition}\label{def:cemetary}
Suppose there exists a set $K^c\subseteq K$ such that 
for all $k\in K^c$ it holds that $\lambda_k(x)=0$,
and $\phi_k(x,t)=x$ for all $x\in E_k$ and all $t\in \R$.
We call $E^c:=\bigcup_{k\in K^c} E_k$ a {\em cemetery} of the PDMP.
\end{definition}

\begin{definition}\label{def:cost-functional}
Let a PDMP be given and let  $E^c\neq\emptyset$ be a cemetery of the PDMP.
A {\em running reward function}
$\ell\colon E\to \R$ is a measurable function satisfying $\ell|_{E^c}\equiv
0$. A {\em terminal cost function} $\Psi\colon E^c\to\R$ is a measurable function satisfying $\Psi|_{E\backslash E^c}\equiv
0$. The {\em cost functional} $V\colon E\to\R$ corresponding to 
$E^c,\ell,\Psi$ is defined by
\begin{align}\label{eq:def_V}
V(x)=\E_x\left(\int_0^\tau e ^{-\delta t} \ell(X_t) dt+e^{-\delta\tau}\Psi(X_\tau)\right),
\end{align}
where $\tau=\inf\{t\geq 0 \colon X_t\in E^c\}$.
\end{definition}

Let $T_1$ be the first jump time. Equation \eqref{eq:def_V} can be rewritten as follows,
\begin{align*}
V(x)=&\E_x\Bigg[
\left(\int_0^{T_1}e^{-\delta t}\ell(\phi(x,t))dt
+\int_{T_1}^{\tau }e^{-\delta t}\ell(\phi(X_{T_1},t-T_1))dt+e^{-\delta\tau }\Psi(X_{\tau })\right)\1_{\{T_1<\tau \}}\\
&\quad+\left(\int_0^{\tau }e^{-\delta t}\ell(\phi(x,t))dt+e^{-\delta\tau }\Psi(\phi(x,\tau ))\right)\1_{\{\tau <T_1\}}\\
&+\left(\int_0^{T_1}e^{-\delta t}\ell(\phi(x,t))dt+e^{-\delta T_1}\Psi(X_{T_1})\right)\1_{\{T_1=\tau \}}
\Bigg].
\end{align*}
Since $X$ is a PDMP and hence a strong Markov process, this yields
$V=\mathcal{H}+\mathcal{G}V$ with $\mathcal{H}\colon E \to\R$, $\mathcal{G}\colon C^2(E,\R)\to \R$ defined by 
\begin{align}\label{eq:induction}
\mathcal{H}(x)&=\E_x\Bigg[
\left(\int_0^{T_1}e^{-\delta t}\ell(\phi(x,t))dt\right)\1_{\{T_1<\tau \}}\nonumber\\
&\quad+\left(\int_0^{\tau }e^{-\delta t}\ell(\phi(x,t))dt+e^{-\delta\tau }\Psi(\phi(x,\tau ))\right)\1_{\{\tau <T_1\}}\nonumber\\
&\quad+\left(\int_0^{T_1}e^{-\delta t}\ell(\phi(x,t))dt+e^{-\delta T_1}\Psi(X_{T_1})\right)\1_{\{T_1=\tau \}}\Bigg],\nonumber\\
\mathcal{G}V(x)&=\E_x\Bigg[e^{-\delta T_1} V(X_{T_1})\1_{\{T_1<\tau \}}\Bigg].
\end{align}
Recall that  for every $t\ge0$ it holds that $\P_x(T_1>t)=\exp\!\big(-\int_0^t\lambda(\phi(x,s))ds\big)=: 1-F_W(t,x)$
and denote the corresponding density by $f_W$.
With this, the function $\mathcal{H}$ and the operator $\mathcal{G}$ admit representations as integrals,
\begin{align*}
\mathcal{H}(x)&=\int_0^{t^\ast (x)}f_W(t,x)\left[\int_0^t e^{-\delta s}\ell(\phi(x,s))ds+e^{-\delta t}\int_{E^c}\Psi(y)Q(dy,\phi(x,t))\right]dt\\
&+(1-F_W(t^\ast (x),x))\left[\int_0^{t^\ast (x)}e^{-\delta s}\ell(\phi(x,s))ds+e^{-\delta t^\ast (x)}\Psi(\phi(x,t^\ast (x)))\right],\\
\mathcal{G}V(x)&=\int_0^{t^\ast (x)}f_W(t,x)e^{-\delta t}\int_E V(y)Q(dy,\phi(x,t))dt.
\end{align*}
Note that $\mathcal{H}(x)$ corresponds to the expected discounted rewards collected before the first jump at time 
$T_1$ when starting in $x$. $\mathcal{G}V(x)$
represents the expected discounted rewards from time $T_1$ onwards conditional on the event 
$\{X_{T_1}\notin E^c, X_0=x\}$. Iterating the above steps $n\in\N$ times leads to
\begin{align}\label{eq:finalV}
V(x)=\mathcal{G}^n V(x)+\sum_{i=0}^{n-1}\mathcal{G}^i\mathcal{H}(x).
\end{align}
\begin{lemma}
\label{lem:Gnto0}
Let $\Psi\colon E^c\to\R$ and $\ell\colon E\to \R$ be bounded, for all $k\in K$ assume that the functions
$\lambda_k$ are bounded by $C_\lambda\in(0,\infty)$,
and for all $x\in E$ let $t^\ast (x)=\infty$. Then for all $x\in E$ and for all $n\in \N$ it holds that $\left|\mathcal{G}^n V(x)\right|\leq C_V\left(C_\lambda/(C_\lambda+\delta)\right)^n$ and, in particular, it holds that $\lim_{n\to\infty} \mathcal{G}^n V(x)=0$ uniformly in $x\in E$.
\end{lemma}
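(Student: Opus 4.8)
The plan is to reduce everything to a single contraction estimate for $\mathcal{G}$. First I would record that the cost functional $V$ is bounded: since $\ell$ and $\Psi$ are bounded and $\delta>0$, for every $x\in E$ one has
\[
|V(x)|\le \E_x\!\left(\int_0^\infty e^{-\delta t}\,|\ell(X_t)|\,dt\right)+\E_x\!\left(e^{-\delta\tau}|\Psi(X_\tau)|\,\1_{\{\tau<\infty\}}\right)\le \frac{\sup_{E}|\ell|}{\delta}+\sup_{E^c}|\Psi|=:C_V ,
\]
with the convention $e^{-\infty}:=0$. Regarding $\mathcal{G}$ as acting on bounded measurable functions $E\to\R$ (a class which contains $V$ and hence all iterates $\mathcal{G}^nV$) is the right setting to work in.

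Next I would prove the contraction estimate
\[
\sup_{x\in E}|\mathcal{G}W(x)|\le q\,\sup_{x\in E}|W(x)| ,\qquad q:=\frac{C_\lambda}{C_\lambda+\delta}\in(0,1),
\]
for every bounded measurable $W$. Since $Q(\,\cdot\,,\phi(x,t))$ is a probability measure on $(E,\cE)$, the inner integral in $\mathcal{G}W(x)=\int_0^{\infty}f_W(t,x)e^{-\delta t}\int_E W(y)\,Q(dy,\phi(x,t))\,dt$ (using $t^\ast(x)=\infty$) is bounded in absolute value by $\sup_E|W|$, so it suffices to show $\int_0^\infty e^{-\delta t} f_W(t,x)\,dt\le q$ for every $x$. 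I would argue this either probabilistically or analytically. Probabilistically: $\int_0^\infty e^{-\delta t}f_W(t,x)\,dt=\E_x(e^{-\delta T_1})$ (with $e^{-\infty}:=0$), and since $\P_x(T_1>t)=\exp(-\int_0^t\lambda(\phi(x,s))\,ds)\ge e^{-C_\lambda t}$ by the assumed bound $\lambda_k\le C_\lambda$, the first jump time $T_1$ stochastically dominates an $\mathrm{Exp}(C_\lambda)$ variable; as $t\mapsto e^{-\delta t}$ is non-increasing, $\E_x(e^{-\delta T_1})\le \int_0^\infty e^{-\delta t}\,C_\lambda e^{-C_\lambda t}\,dt=q$. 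Analytically: with $\Lambda(t):=\int_0^t\lambda(\phi(x,s))\,ds$ one has $f_W(t,x)=\Lambda'(t)e^{-\Lambda(t)}$, integration by parts gives $\int_0^\infty e^{-\delta t}f_W(t,x)\,dt=1-\delta\int_0^\infty e^{-\Lambda(t)-\delta t}\,dt$, and $\Lambda(t)\le C_\lambda t$ yields $\int_0^\infty e^{-\Lambda(t)-\delta t}\,dt\ge\int_0^\infty e^{-(C_\lambda+\delta)t}\,dt=\tfrac{1}{C_\lambda+\delta}$, hence again the bound $q$.

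Finally I would iterate. Each application of $\mathcal{G}$ preserves bounded measurable functions, so $\mathcal{G}^nV$ is well defined, and induction on $n$ with the contraction estimate gives
\[
\sup_{x\in E}|\mathcal{G}^nV(x)|\le q^n\sup_{x\in E}|V(x)|\le C_V\,q^n=C_V\left(\frac{C_\lambda}{C_\lambda+\delta}\right)^n ,
\]
and since $\delta>0$ forces $q<1$, the right-hand side tends to $0$ as $n\to\infty$, uniformly in $x\in E$.

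The main — and essentially only — obstacle is the bound $\int_0^\infty e^{-\delta t}f_W(t,x)\,dt\le q$: the naive estimate $f_W(t,x)\le C_\lambda$ gives only $C_\lambda/\delta$, which need not be $<1$, so one has to genuinely exploit the survival factor $e^{-\int_0^t\lambda(\phi(x,s))\,ds}$ contained in $f_W$, either through the integration by parts above or through the stochastic domination of $T_1$ by an exponential variable. Everything else — boundedness of $V$, stability and measurability of the iterates $\mathcal{G}^nV$, and the induction — is bookkeeping.
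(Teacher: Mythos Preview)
Your proof is correct, but it proceeds differently from the paper. The paper first uses the strong Markov property to derive the probabilistic representation
\[
\mathcal{G}^n V(x)=\E_x\!\left[e^{-\delta T_n}V(X_{T_n})\1_{\{\tau>T_n\}}\right],
\]
and then bounds $\E_x[e^{-\delta T_n}]$ in one stroke by comparing $T_n$ with an $\mathrm{Erlang}(n,C_\lambda)$ variable, whose Laplace transform is exactly $(C_\lambda/(C_\lambda+\delta))^n$. You instead stay at the operator level and prove the single-step contraction $\|\mathcal{G}W\|_\infty\le q\|W\|_\infty$ with $q=C_\lambda/(C_\lambda+\delta)$, then iterate. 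Your route is a bit more elementary in that it avoids the strong Markov property and the intermediate representation of $\mathcal{G}^nV$; it also supplies a purely analytic alternative (the integration-by-parts computation) to the stochastic-domination step. The paper's route, on the other hand, yields the explicit formula for $\mathcal{G}^nV$ in terms of $T_n$ and $X_{T_n}$, which is of independent interest and connects the bound directly to the jump-time structure. Both arguments hinge on the same observation---that each inter-jump time stochastically dominates an $\mathrm{Exp}(C_\lambda)$ variable---so the difference is one of packaging rather than substance.
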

\begin{proof}
The boundedness of $\ell$ and $\Psi$ implies that also $V$ is bounded by
$C_V=\frac{\|\ell\|_\infty}{\delta}+\|\Psi\|_\infty$.
Using the strong Markov property and Equation \eqref{eq:induction} we have by induction on $n$,
\begin{align}\label{eq:Gn}
\mathcal{G}^n V(x)&=
\E_x \left[e^{-\delta T_1} \mathcal{G}^{n-1} V(X_{T_1})1_{\{T_1<\tau\}}\right]\nonumber\\
&= \E_x \left[e^{-\delta T_1}  \E_{X_{T_1}}\left[e^{-\delta (T_n-T_1)} V(X_{T_n})1_{\{T_n <\tau\}}\right] 1_{\{T_1<\tau\}}\right]\nonumber\\
&= \E_x \left[\E_{X_{T_1}}\left[e^{-\delta T_n} V(X_{T_n})1_{\{T_n <\tau\}}1_{\{T_1<\tau\}}\right] \right]\nonumber\\
&=\E_x\left[e^{-\delta T_n}V(X_{T_n}) \1_{\{\tau >T_n\}}\right],
\end{align}
where we used $1_{\{T_n <\tau\}}1_{\{T_1<\tau\}}=1_{\{T_n < \tau\}}$ in the last equality.
Recall that $\P(T_n-T_{n-1}>t\,\vert\,T_{n-1},\,X_{T_{n-1}})=\exp\!\big(-\int_0^t\lambda(\phi(s,X_{T_{n-1}}))ds\big)\geq \exp(-t\,C_\lambda)$.
For every $n\in\N$ let $Z_n\sim\text{Erlang}(n,C_\lambda)$ be an Erlang-distributed random variable.
Combining this with \eqref{eq:Gn} we get that
\begin{align*}
\left|\mathcal{G}^n V(x)\right|\leq C_V\E_x\left[e^{-\delta T_n}\right]\leq C_V\E\left[e^{-\delta Z_n}\right]=C_V\left(\frac{C_\lambda}{C_\lambda+\delta}\right)^n.
\end{align*}
The latter expression converges to zero as $n\to\infty$ uniformly in $x\in E$.
\end{proof}
Combining Lemma \ref{lem:Gnto0} with \eqref{eq:finalV} results in the error estimate
\begin{align}\label{eq:bias}
\left|\,V(x)-\sum_{i=0}^{n-1}\mathcal{G}^i\mathcal{H}(x)\right|\leq C_V\left(\frac{C_\lambda}{C_\lambda+\delta}\right)^n.
\end{align}
Finally, we obtain the following representation,
\begin{equation}\label{eq:iterate}
\begin{aligned}
\mathcal{G}^{i-1}\mathcal{H}(x_0)=&\int\limits_{t_1=0}^{t^\ast (x_0)}f_W(t_1,x_0)e^{-\delta t_1}\hspace{-1.5ex}
\int\limits_{x_1\in E}\int\limits_{t_2=0}^{t^\ast (x_1)}f_W(t_2,x_1)e^{-\delta t_2}
\hspace{-1.5ex}\int\limits_{x_2\in E}\cdots\int\limits_{t_{i-1}=0}^{t^\ast (x_{k-2})}f_W(t_{i-1},x_{i-2})e^{-\delta t_{i-1}}\hspace{-1.5ex}
\\
&\int\limits_{x_{i-1}\in E}\mathcal{H}(x_{i-1}) Q(dx_{i-1},\phi(x_{i-2},t_{i-1}))dt_{i-1}\cdots Q(dx_1,\phi(x_0,t_1))dt_1\\
=&\int\limits_{t_1=0}^{t^\ast (x_0)}\int\limits_{x_1\in E}\cdots\int\limits_{t_{i-1}=0}^{t^\ast (x_{i-2})}\int\limits_{x_{i-1}\in E}
\left(\prod_{j=1}^{i-1}f_W(t_j,x_{j-1})e^{-\delta t_j}\right) \\
&\mathcal{H}(x_{i-1}) Q(dx_{i-1},\phi(x_{i-2},t_{i-1}))dt_{i-1}\cdots Q(dx_1,\phi(x_0,t_1))dt_1.
\end{aligned}
\end{equation}
In \eqref{eq:iterate} we denote by $\{t_j\}_{j\in\{1,\ldots,i-1\}}$ the family of inter-jump times and by $\{x_j\}_{j\in\{1,\ldots,i-1\}}$ the family of post-jump locations. 
\begin{remark}
Solving the integral $\mathcal{G}^{i-1}\mathcal{H}(x_0)$ brings several advantages
compared to a crude Monte Carlo approach. First, \eqref{eq:iterate} is an integral with a fixed dimension. Hence, it can be approximated using deterministic integration rules like quasi-Monte Carlo, for which deterministic error bounds are available.
Second, the bias of restricting oneself to a fixed number of jumps can be estimated uniformly in $x=x_0$ using 
the bias estimate in Lemma \ref{lem:Gnto0}.
Third, rare events like surviving a large number of jumps are---in this formulation---not rare in the sense that
it is unlikely to draw such a realisation, which has the effect of importance sampling.
\end{remark}

\section{Cubature rules for $C^\kappa$-functions}\label{sec:cubature}
In order to obtain convergence estimates for numerical integration methods such as quasi-Monte Carlo (QMC) methods or other 
cubature rules, we need more regularity of the integrands than they admit in many practical applications.
For example, we may need to bound a certain norm of the Hessian matrix of the integrand. In
Section \ref{sec:smoothing}, we will rewrite the problem introduced in Section \ref{sec:fixed-point-approach} so that the integrand is a function $f\colon [0,1]^d\rightarrow \R$ 
which satisfies $f\in C^2([0,1]^d)$, or more generally $f\in C^\kappa ([0,1]^d)$ for some $\kappa \in\N$.
We outline two different methods for treating such integrands $f$ by cubature rules.

\subsection{Quasi-Monte Carlo methods}
Quasi-Monte Carlo methods are equal-weight cubature rules with $M$ deterministically chosen integration nodes.
Let the integrand $f\colon [0,1]^d\rightarrow \R$ satisfy $f\in C^2([0,1]^d)$. 
In order to obtain a convergence estimate for numerical integration of $f$ using QMC, we require a so-called Koksma-Hlawka type inequality.  
The original Koksma-Hlawka inequality bounds the integration error of a QMC rule by the product of the variation of the integrand (in the sense of Hardy and Krause) and the so-called discrepancy of the integration node set (see, e.g., \cite[Chapter 2]{niederreiter1992}). 
We remark, however, that we cannot easily apply the classical Koksma-Hlawka inequality in this paper,
as we cannot rely on the integrands to have bounded variation in the sense of Hardy and Krause.
Hence, we are going to resort to a variant of the Koksma-Hlawka inequality which was recently proven in \citet{pausingersvane2015}.  
Let $Q_{M,d} (f)=\frac{1}{M}\sum_{j=1}^M f(\bsx_j)$ be a QMC rule using $M$ integration nodes $\bsx_1,\ldots,\bsx_M \in [0,1)^d$.
Then by \cite[Theorem 3.12]{pausingersvane2015} we have 
\begin{equation}\label{eqpausva}
 \abs{\int_{[0,1]^d} f(\bsx) d \bsx - Q_{M,d} (f)}\le 
 \left(\sup_{\bsx\in[0,1]^d} f(\bsx) - \inf_{\bsx\in [0,1]^d} f(\bsx) + \frac{d}{16} M(f)\right) \mathrm{Disc}_{\mathrm{I}} (\bsx_1,\ldots,\bsx_M),
\end{equation}
where $M(f)=\sup_{\bsx\in [0,1]^d}\norm{\mathrm{Hess}(f,\bsx)}$, $\mathrm{Hess} (f,\bsx)$ is the Hessian matrix of $f$ at $\bsx$, $\norm{\cdot}$ denotes 
the usual operator norm, and where $\mathrm{Disc}_{\mathrm{I}}(\bsx_1,\ldots,\bsx_M)$ is the isotropic discrepancy of the integration node set,
\[
 \mathrm{Disc}_{\mathrm{I}}(\bsx_1,\ldots,\bsx_M)=\sup_{\substack{ C\subseteq [0,1]^d\\  C\ \mathrm{convex}}}
 \abs{\frac{1}{M}\sum_{j=1}^M \1_{\{\bsx_j\in  C\}} - \mu_d ( C)},
\]
where $\mu_d$ denotes the Lebesgue measure on the $\R^d$. Now let $\bsx_1,\ldots,\bsx_M\in [0,1]^d$. In \cite[Chapter 2]{niederreiter1992} it is shown that
\[\mathrm{Disc}_{\mathrm{I}} (\bsx_1,\ldots,\bsx_M) \le 8 d \left(\mathrm{Disc}_{\ast} (\bsx_1,\ldots,\bsx_M)\right)^{1/d},\]
where by $\mathrm{Disc}_{\ast} (\bsx_1,\ldots,\bsx_M)$ we denote the star discrepancy of $\bsx_1,\ldots,\bsx_M$, defined as 
\[
 \mathrm{Disc}_{\ast}(\bsx_1,\ldots,\bsx_M) =\sup_{\boldsymbol{a}\in [0,1)^d}
 \abs{\frac{1}{M}\sum_{j=1}^M \1_{\{\bsx_j\in [\boldsymbol{0},\boldsymbol{a})\}} - \mu_d ([\boldsymbol{0},\boldsymbol{a}))},
\]
where $[\boldsymbol{0},\boldsymbol{a})$ denotes $[0,a_1)\times \cdots \times [0,a_d)$ for $\boldsymbol{a}=(a_1,\ldots,a_d)$.
It is well known that common point sequences that are employed in QMC methods, such as Sobol' sequences or Halton sequences, have 
a star discrepancy of order $(\log M)^d /M$ (and it is known that this order can, if at all, 
only be improved with respect to the exponent of the $\log$-term). 
Hence, by using, e.g., Sobol' points in a QMC method for numerically integrating a $C^2$-function, 
we cannot expect an error that converges to zero faster than 
$(\log M)/M^{1/d}$.\\
As we shall see below, this order of magnitude can, with respect to the disadvantageous dependence on $d$, not be
improved further for $C^2$-functions. However, there is room for improvement if we make additional smoothness assumptions on 
the integrand. 

\subsection{Product rules}

In \citet{Hinrichs2017} it is shown that, by using products of Gauss rules, one can obtain the following result. 
Let $f\colon [0,1]^d\rightarrow\R$ be such that $f\in C^\kappa $ for some $\kappa \in\N$. Then, by using a product rule $Q_{G,\tilde M,d}$ of $\tilde M$-point Gauss quadrature rules, one obtains 
\begin{equation}\label{eqhnuw}
 \abs{\int_{[0,1]^d} f(\bsx) d \bsx - Q_{G,\tilde M,d} (f)}\le c_\kappa  d \tilde M^{-\kappa } \norm{f}_{C^\kappa },\quad \mbox{for}\quad \tilde M\ge \kappa +1,
\end{equation}
where $c_\kappa =(\pi/2) (\mathrm{e}/(6\sqrt{3}))^\kappa $, and where
\[
 \norm{f}_{C^\kappa }=\max_{\substack{\boldsymbol{\beta}\in \N_0^d\\ \norm{\boldsymbol{\beta}}_{1}\le \kappa }} 
\norm{D^{\boldsymbol{\beta}} (f)}_{L_\infty},
\]
where $D^{\boldsymbol{\beta}}$ denotes the (weak) partial derivative of order $\boldsymbol{\beta}$ for $\boldsymbol{\beta}\in \N_0^d$.
A $d$-fold Gauss product rule as described above uses $M=\tilde M^d$ points in total, and hence yields
a convergence order of $M^{-\kappa /d}$. It is known due to \cite{bakhvalov1959} that this convergence order is best possible. 
For the special case $\kappa =2$, we only obtain a relatively small improvement over the bound implied by \eqref{eqpausva}.
However, there is an additional advantage in the  bound \eqref{eqhnuw}. By requiring that the function $f$ 
satisfies additional smoothness assumptions, namely 
that $f\in C^\kappa $ for some $\kappa \in\N$ which is possibly larger than 2, we obtain an improved convergence rate.
Hence, we face a trade-off between imposing a higher degree of smoothness on the integrand $f$ to obtain a higher accuracy 
in the quadrature rule, and the error we make by smoothing the integrand to that extent. 
It is therefore likely that the method needs to be fine-tuned on a case-by-case basis.
In practice, product rules often cannot be applied, since, for example, for integrating a $d=1024$-variate integrand using only $\tilde M=2$ 
integration nodes per coordinate requires a point set consisting of $M=2^{1024}$ integration nodes. To overcome the latter problem, it might 
be useful to apply the theory of weighted integration as introduced in \cite{SW98}, possibly combined with truncation (see, e.g., \cite{KPW16}) or 
multivariate decomposition methods (see, e.g., \cite{KSWW10}). A detailed analysis of these approaches applied to the present problem is left open for 
future research.

\section{Smoothing of the integrand}\label{sec:smoothing}
The integrand in \eqref{eq:iterate} is not necessarily a $C^\kappa$-function.
Therefore, in this section we provide a technique 
for smoothing the integrand in order to apply
convergence results for integration rules that are described in Section
\ref{sec:cubature}.

The piecewise construction of the process described in Definition \ref{def:PDMP} leads to the situation that $X_{t}=\phi(X_{T_{j-1}},t-T_{j-1})$
for $t\in[T_{j-1},T_j)$ is a function of $X_{T_{j-1}}$ and $T_{j-1}$. In particular, all subsequent pre-jump locations depend on all
previous post-jump locations and jump times, via $\phi$ and $\lambda$. Consequently, regularity of the integrand depends on regularity of the flow $\phi$ and the intensity function $\lambda$.
The analysis in this section is restricted to the case where the flow originates
from autonomous ODEs, i.e.,~for all $k\in K$ there exist Lipschitz 
continuous functions $\odecoeff_k:\R^{d(k)}\to \R^{d(k)}$ such that 
$\frac{\partial}{\partial t}\phi_k(y,t)= \odecoeff_k(\phi_k(y,t))$.
General results from the literature on ODEs, see, e.g.,~\cite{Grigorian}, yield that the derivatives
$\frac{\partial}{\partial y}\phi_k,\,\frac{\partial^2}{\partial y^2}\phi_k,\,\frac{\partial}{\partial t}\phi_k$
can be described by so-called associated first and second order variational equations for
which one requires  $\odecoeff_k$ to be a $C^2$-function.

For the density $f_W$ of the inter-jump times to be $C^2$ we need that
$\lambda\in C^2(E,\R)$. Also we need $\ell\in C_b^2(E,\R)$, and $\Psi\in C_b^2(E,\R)$ since they appear in the integral defining $\cH$. 

A serious problem with respect to smoothness arises if the PDMP model allows for jumps from the active 
boundary. Suppose $(k,y)\in E$ and $t^*(k,y)<\infty$.
Then, conditional on $X_t=(k,y)$, the time of the next jump is distributed as
$\min(T,t^*(k,y))$, where $T$ has distribution function $F_T(t)=
1-\exp(-\int_0^t\lambda_k(\phi_k(y,s))ds)$. 
But in general neither $t^*(k,y)$ nor
 $\min(T,t^*(k,y))$ will depend
smoothly on $y$, even if $\lambda_k$ has arbitrarily high regularity. 
We are not aware of a general remedy for this problem.  
However, for all PDMP models put forward in Section \ref{sec:examples}, 
the jumps from the active boundary do not constitute jumps of the original problem. 
In the following subsection we describe by example how PDMPs can be approximated by PDMPs that do not allow for jumps from the boundary.

Concerning the jump kernel $Q$, it is hard to state general sufficient regularity conditions.
An exemplary favourable situation arises if the jump kernel $Q$ admits a $C^2$-density $f_Y$ in the sense that $Q(A,x)=\int_A f_Y(x_1,x)dx_1$
for all $A\in\mathcal{E}$ and all $x\in E$. In the one-dimensional examples from risk theory in Sections \ref{subsubsec:C-L}--\ref{subsubsec:CL-loan}, this condition is satisfied and for
the two-dimensional example in Section \ref{subsubsec:C-L-multidim} we present a smoothing technique in Section \ref{sec:smoothQ}.

\subsection{Smoothing of the flow}

Consider the example from Section \ref{subsubsec:CL-loan} without dividend 
barrier. We can describe the problem alternatively with a state space
consisting of three components: 
\begin{itemize}
\setlength{\itemsep}{0em}
\item $K=\{1,2,3\}$, 
\item $E_1=(-\frac{c}{\rho},\infty)$, $E_2=(-\infty,-\frac{c}{\rho})$, $E_3=\{-\frac{c}{\rho}\}$, 
\item 
$\phi_1$ is determined by an autonomous ODE of the form 
$\odecoeff_1:\R\to \R$,
\begin{align}\label{eq:ode-coeff-de}
\odecoeff_1(y)&=\left\{
\begin{array}{ll}
c,& \mbox{if}\ y\in(0,\infty),\\
c+\rho y ,& \mbox{if}\ y\in(-\frac{c}{\rho},0],\\
0,& \mbox{if}\ y\in(-\infty,-\frac{c}{\rho}],
\end{array}
\right.
\end{align}
for some $c>0,\,\rho>0$. The function $\phi_2$ is given by
$\phi_2(y,t)=y$ $\forall y\in E_2$ and $\forall t\in \R$,
and $\phi_3$ by 
$\phi_3(y,t)=y$ $\forall y\in E_3$ and  $\forall t\in \R$, 
\item $\lambda_1(y)=\lambda_N$ $\forall y\in E_1$, 
$\lambda_2(y)=0$ $\forall y\in E_2$, 
$\lambda_3(y)=0$ $\forall y\in E_3$. 
\item For 
$B=(\{1\}\times B_1)\cup(\{2\}\times B_2)\cup(\{3\}\times B_3)\in \mathcal{E}$, 
\begin{align*}
Q(B,(1,y))& =\P(Y\in y-B_1)+\P(Y\in y-B_2)+\P(Y\in y-B_3)\,  &(\mbox{for}\ y\in E_1)\,,\\
Q(B,(2,y))&=\P(Y\in y-B_2)\,& (\mbox{for}\ y\in E_2)\,,\\
Q(B,(3,y))&=\P(Y\in y-B_2)+\P(Y\in y-B_3)\,&(\mbox{for}\ y\in E_3)\,.
\end{align*}
\end{itemize}
Here, $\odecoeff_1$ is not differentiable in $0$. However,
we may smoothen this discontinuity using a `smoothened Heaviside function'.
Note that $\Gamma^*=\emptyset$.
\begin{definition}
Let $\kappa \in \N\cup\{0\}$.
We call a function 
$\sheavi\colon \R\to [0,1]$ a $C^\kappa$-Heaviside function,  
if 
\begin{itemize}
\setlength{\itemsep}{0em}
\item $\sheavi(y)=0$ for $y<-1$,
\item $\sheavi(y)=1$ for $y>1$,
\item $\sheavi$ is non-decreasing,
\item $\sheavi(y)+\sheavi(-y)=1$,
\item $\sheavi$ is $\kappa$-times continuously differentiable.
\end{itemize}
\end{definition}

\begin{lemma}
Let $\kappa \in \N\cup\{0\}$, and let $f\colon \R\to\R$ be a piecewise $C^\kappa$-function with discontinuity in $\xi\in \R$, i.e.,~let 
there exist $C^\kappa$-functions $f_1,f_2\colon \R\to \R$ with
$f=f_1$ on $(-\infty,\xi)$ and $f=f_2$ on $(\xi,\infty)$.
Let $\sheavi$ be a $C^\kappa$-Heaviside function.
For every $\varepsilon>0$ define  $f^\varepsilon\colon \R\to\R$ by
$f^\varepsilon(y)=f_1(y)\sheavi(\frac{y-\xi}{\varepsilon})+f_2(y)\sheavi(-\frac{y-\xi}{\varepsilon})$.
Then,
\begin{enumerate}[(i)]
\item $f^\varepsilon\in C^\kappa$ 
for every $\varepsilon>0$,
\item  $f^\varepsilon\big|_{\R\backslash (-\varepsilon,\varepsilon)}=
f\big|_{\R\backslash (-\varepsilon,\varepsilon)}$
for every $\varepsilon>0$,
\item for all $y\in \R\backslash \{\xi\}$ it holds that  $\lim_{\varepsilon\to 0+} f^\varepsilon(y)=f(y)$, 
\item for all $\delta>0$ it holds that 
 $\lim_{\varepsilon\to 0+}\sup_{y\in \R\backslash(\xi-\delta,\xi+\delta)}|f^\varepsilon(y)-f(y)|=0$.
\end{enumerate}
\end{lemma}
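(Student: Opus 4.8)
Everything hinges on the \emph{plateau property} of a $C^\kappa$-Heaviside function $\sheavi$: it equals $0$ on $(-\infty,-1)$ and $1$ on $(1,\infty)$. This confines the difference $f^\varepsilon-f$ to the interval $(\xi-\varepsilon,\xi+\varepsilon)$ of radius $\varepsilon$ about $\xi$, after which (ii)--(iv) follow almost at once, while (i) is a routine application of the chain rule together with the closure of $C^\kappa$ under sums and products.

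For (i), I would observe that $y\mapsto(y-\xi)/\varepsilon$ is affine, hence $C^\infty$, so both $y\mapsto\sheavi\big((y-\xi)/\varepsilon\big)$ and $y\mapsto\sheavi\big(-(y-\xi)/\varepsilon\big)$ are $C^\kappa$ on all of $\R$. Since $f_1$ and $f_2$ are, by hypothesis, $C^\kappa$ on all of $\R$ --- it matters here that they are globally defined, not merely defined on the two pieces --- the products $f_1\cdot\sheavi\big((\cdot-\xi)/\varepsilon\big)$ and $f_2\cdot\sheavi\big(-(\cdot-\xi)/\varepsilon\big)$, and hence their sum $f^\varepsilon$, lie in $C^\kappa(\R)$; Leibniz's rule additionally yields the derivatives explicitly should one want quantitative bounds later on.

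For (ii), the substance is the evaluation of $f^\varepsilon$ outside $(\xi-\varepsilon,\xi+\varepsilon)$. If $y\ge\xi+\varepsilon$, then $(y-\xi)/\varepsilon\ge1$ and $-(y-\xi)/\varepsilon\le-1$, so the plateau property --- together with continuity of $\sheavi$ at $\pm1$, which with monotonicity and the symmetry $\sheavi(u)+\sheavi(-u)=1$ forces $\sheavi(1)=1$ and $\sheavi(-1)=0$ --- gives $\sheavi\big((y-\xi)/\varepsilon\big)=1$ and $\sheavi\big(-(y-\xi)/\varepsilon\big)=0$, hence $f^\varepsilon(y)=f_2(y)=f(y)$ as $y>\xi$. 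The case $y\le\xi-\varepsilon$ is symmetric and yields $f^\varepsilon(y)=f_1(y)=f(y)$. This establishes $f^\varepsilon=f$ on $\R\setminus(\xi-\varepsilon,\xi+\varepsilon)$.

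Finally, (iii) and (iv) are immediate consequences of (ii): given $y\ne\xi$, every $\varepsilon<|y-\xi|$ places $y$ outside $(\xi-\varepsilon,\xi+\varepsilon)$, so $f^\varepsilon(y)=f(y)$ and thus $\lim_{\varepsilon\to0+}f^\varepsilon(y)=f(y)$; and given $\delta>0$, every $\varepsilon<\delta$ satisfies $\R\setminus(\xi-\delta,\xi+\delta)\subseteq\R\setminus(\xi-\varepsilon,\xi+\varepsilon)$, so the supremum of $|f^\varepsilon-f|$ over $\R\setminus(\xi-\delta,\xi+\delta)$ equals $0$ for all such $\varepsilon$. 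I anticipate no genuine obstacle: the statement is elementary, and the only points meriting a line of care are the endpoint values $\sheavi(\pm1)$ invoked in (ii) and the remark that $C^\kappa$-regularity of $f_1,f_2$ is assumed on all of $\R$, so that the products are globally, not merely piecewise, $C^\kappa$.
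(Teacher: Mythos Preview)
The paper's own proof is a single line: ``The elementary proof is left to the reader.'' So there is no strategy to compare against; your approach---reduce everything to the plateau property of $\sheavi$ and the closure of $C^\kappa$ under products and compositions with affine maps---is exactly the intended elementary argument.

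There is, however, a computational slip in your verification of (ii), and it traces back to a typo in the lemma as printed. With the formula $f^\varepsilon(y)=f_1(y)\,\sheavi\!\big(\tfrac{y-\xi}{\varepsilon}\big)+f_2(y)\,\sheavi\!\big(-\tfrac{y-\xi}{\varepsilon}\big)$ and the plateau values you correctly determine, one gets $f^\varepsilon(y)=f_1(y)$ for $y\ge\xi+\varepsilon$ and $f^\varepsilon(y)=f_2(y)$ for $y\le\xi-\varepsilon$---the \emph{opposite} of what you write. Since $f=f_1$ on $(-\infty,\xi)$ and $f=f_2$ on $(\xi,\infty)$, the formula as stated actually swaps the two pieces, and (ii)--(iv) would be false in general. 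The intended formula evidently has the roles of $f_1$ and $f_2$ interchanged (or, equivalently, the signs inside $\sheavi$ swapped); the paper's subsequent concrete examples and the ``from the left''/``from the right'' variants just below the lemma make this clear. You silently corrected for this, which is the right instinct, but the written computation does not match the written hypothesis. A one-line remark flagging the typo and stating the corrected formula would close the gap. You also---correctly---replaced the set $\R\setminus(-\varepsilon,\varepsilon)$ in (ii) by $\R\setminus(\xi-\varepsilon,\xi+\varepsilon)$; that is another evident misprint in the statement.
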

\begin{proof}
The elementary proof is left to the reader.
\end{proof}
There are various possible choices for the smoothing:~from the left 
$f^{\varepsilon-}(y)=f_1(y)\sheavi(\frac{y-\xi+\varepsilon}{\varepsilon})+f_2(y)\sheavi(-\frac{y-\xi+\varepsilon}{\varepsilon})$ 
and from the right
$f^{\varepsilon+}(y)=f_1(y)\sheavi(\frac{y-\xi-\varepsilon}{\varepsilon})+f_2(y)\sheavi(-\frac{y-\xi-\varepsilon}{\varepsilon})$.
Figure \ref{fig:smooth} depicts these three possible smoothings for a function with a discontinuity in $\xi=1$.
\begin{figure}
\begin{center}
\includegraphics{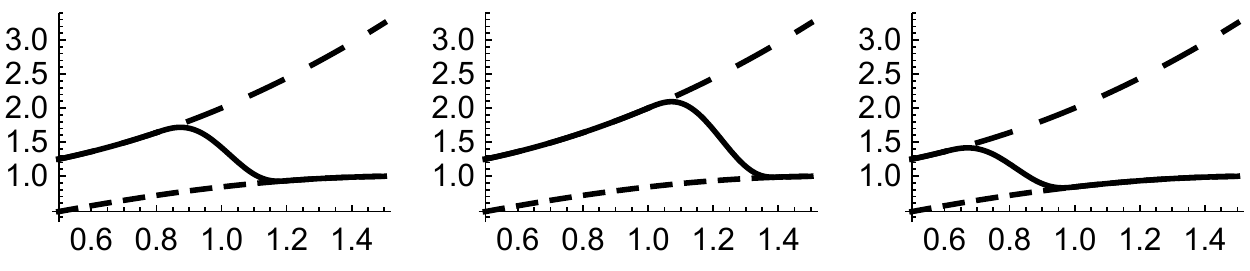}
\end{center}
\caption{Illustration of smoothing a piecewise $C^2$-function with a discontinuity in $\xi=1$.}\label{fig:smooth}
\end{figure}
A concrete example for a function $\sheavi$ that satisfies the above requirements is given by
\begin{equation}\label{eq:sheavi}
\sheavi(y)=
\begin{cases}
0 & \mbox{if}\ y<-1,\\
\frac{1}{2} + \frac{15 y}{16 } - \frac{5 y^3}{8 } + 
\frac{3 y^5}{16 }  & \mbox{if}\ y\in [-1,1],\\
1 & \mbox{if}\ y>1.
\end{cases}
\end{equation}
For every $\veps>0$, a  smoothed version of the function $\odecoeff_1$ defined
in \eqref{eq:ode-coeff-de} is given by
\begin{align*}
\odecoeff_1^\varepsilon(y)&=(c+\rho\, y)\,\sheavi\Big(-\frac{y}{\varepsilon}\Big)+c\,\sheavi\Big(\frac{y}{\varepsilon}\Big)\,.
\end{align*}
We can finally formulate a PDMP corresponding to the new model, where the flow has been smoothened,
\begin{itemize}
\setlength{\itemsep}{0em}
\item $K=\{1,2,3\}$, 
\item $E_1=(-\frac{c}{\rho},\infty)$, $E_2=(-\infty,-\frac{c}{\rho})$, $E_3=\{-\frac{c}{\rho}\}$,
\item $\frac{\partial}{\partial t}\phi_1^\varepsilon(y,t)=\odecoeff_1^\varepsilon(\phi_1^\varepsilon(y,t))$ 
$\forall y\in E_1$ and $\forall t\in \R$, 
$\phi_k(y,t)=y$ $\forall y\in E_k$ and $\forall t\in \R$,  $k\in\{2,3\}$;
\item $\lambda_1(y)=\lambda_N$  $\forall y\in E_1$, 
$\lambda_k(y)=0$  $\forall y\in E_k$, $k\in\{2,3\}$;
\item for 
$B=(\{1\}\times B_1)\cup(\{2\}\times B_2)\cup(\{3\}\times B_3)\in \mathcal{E}$, 
\begin{align*}
Q(B,(1,y))& =\P(Y\in y-B_1)+\P(Y\in y-B_2)+\P(Y\in y-B_3)\,  &(\mbox{for}\ y\in E_1)\,,\\
Q(B,(2,y))&=\P(Y\in y-B_2)\,& (\mbox{for}\ y\in E_2)\,,\\
Q(B,(3,y))&=\P(Y\in y-B_2)+\P(Y\in y-B_3)\,&(\mbox{for}\ y\in E_3)\,.
\end{align*} 
\end{itemize}
Note that $\Gamma^*=\emptyset$.
Since the dividend barrier $b$ is never reached, we also have to smoothen the reward function in a way that the region where dividends are paid 
can be reached, i.e.,
$\ell^\veps(y)=c\, \sheavi(\frac{y-b+\varepsilon}{\varepsilon})$. We will show 
convergence of the corresponding value functions in 
Section \ref{sec:DE}.\\

\subsection{Smoothing of jump measures}\label{sec:smoothQ}

We give an example for a class of jump measures that can be
approximated by measures leading to $C^2$-integrands in \eqref{eq:iterate}.

Let $(E,\mathcal{E})$ be the state space of a PDMP and let 
$(\phi,\lambda,Q)$ be its local characteristics. 
Let the probability kernel $Q$ satisfy the following assumption.
\begin{assumption}\label{ass:jumpkernel}
We assume that
\begin{enumerate}
\item\label{ass:jumpkernel-ass1} there exists a positive integer $n$ such that 
for every $k\in K$,  and every $y\in E_k$,
 there exist sets 
$B_{1}(k,y),\ldots,B_{n}(k,y)$ such that  
\begin{enumerate}[(i)]
\item for every $j\in \{1,\ldots,n\}$ there exists $k_1\in K$ such that
$B_{j}(k,y)\subseteq E_{k_1}$,
\item for every $j\in
\{1,\ldots,n\}$ it holds that $\{(\bar y,z)\colon\bar y\in E_k, z\in
B_{j}((k,\bar y))\}$ is a connected $C^2$-manifold,
\end{enumerate}
\item for every $k\in K$ and every $j\in \{1,\ldots,n\}$ 
the mapping from $E_k$ to $\R$, $\bar y \mapsto Q(B_{j}((k,\bar y),x)$ is $C^2$,
\item for all $x\in E$ it holds that $\sum_{j=1}^n Q(B_{j}(x),x)=1$,
\item for every $x\in E$ and every $j\in \{1,\ldots,n\}$ there exists
a $C^2$-mapping $G_{j,x}\colon [0,1]^{\dim(B_{j})}\to  B_{j}$ such that 
for all $A\in \mathcal{E}$ it holds that
\[
Q(A\cap B_{j},x)=\mu_{\dim(B_{j})}(G_{j,x}^{-1}(A\cap B_{j}))Q(B_{j},x),
\]
where $\mu_m$ denotes the $m$-dimensional Lebesgue measure,
\item for every $k\in K$ and every $j\in \{1,\ldots,n\}$ the mapping from
$E_k\times [0,1]^{\dim(B_{j})}$ to $\bigcup_{l\in K} E_l$,
$(y,u)\mapsto G_{j,(k,y)}(u)$ is $C^2$.
\end{enumerate}
\end{assumption}
Note that Assumption \ref{ass:jumpkernel}.\ref{ass:jumpkernel-ass1} implies that, for every $x\in E$,  $B_{j}(x)$
is a $C^2$-manifold, and that 
for all $x_1=(k_1,y_1),x_2=(k_2,y_2)\in E$ with $k_1=k_2$ we have $\dim  B_{j}(x_1)=\dim B_{j}(x_2)$.

Under Assumption \ref{ass:jumpkernel} we have for $x\in E$ and for $f\in C^2_b(E,\R)$ that
\[
\int_E f(y) Q(dy,x)= \sum_{j=1}^n p_{j}(x)\int_{[0,1]^{\dim (B_{j}(x))}} f(G_{j,x}(u))du,
\]
where $p_{j}(x)=Q(B_{k,j},x)$ for all $x\in E$.
For the integral in \eqref{eq:iterate}
this implies that we have iterated sums for each 
jump, which increases the complexity for large numbers of jumps. 
Instead, we may write the sum as an integral
over $[0,1]$,
\[
\int_E f(y) Q(dy,x)= \int_0^1 \sum_{j=1}^n 
\1_{[q_{k,j-1}(x),q_{k,j}(x))}(u_0)\int_{[0,1]^{\dim (B_{j}(x))}} 
f(G_{j,x}(u))du\,du_0,
\]
where $q_{0}(x)=0$ and $q_{j}(x)=p_{1}(x)+\dots+p_{j}(x)$.
However, with this `trick' we have lost 
the property of the integrand being $C^2$.
So, using again our smoothened Heaviside function 
$\sheavi\colon \R\to [0,1]$, we can smoothen the indicator functions,
\begin{align*}
\lefteqn{\int_E f(y) \Qsmooth(dy,x)}\\
&= \int_0^1 \sum_{j=1}^n \Big(\sheavi\Big(\frac{u_0-q_{j-1}(x)}{\varepsilon}\Big)+\sheavi\Big(\frac{q_{j}(x)-u_0}{\varepsilon}\Big)\Big)\int_{[0,1]^{\dim (B_{j}(x))}} f(G_{j,x}(u))du\,du_0\\
&= \int_0^1 \int_{[0,1]^{{\dim(B_{j}(x))}}}\sum_{j=1}^n \Big(\sheavi\Big(\frac{u_0-q_{j-1}(x)}{\varepsilon}\Big)+\sheavi\Big(\frac{q_{j}(x)-u_0}{\varepsilon}\Big)\Big) f(G_{j,x}(u_1,\ldots,u_{\dim(B_{j}(x))}))du\,du_0\,.
\end{align*}
This expression, considered as a function of $x$, is $C^2$ as it is a composition 
of $C^2$-functions.
\begin{theorem}
In the setup of this section we have for all $f\in C^0_b(E,\R)$ that 
\begin{align*}
\lim_{\varepsilon\to 0}\int_E f(y) \Qsmooth(dy,x)=\int_E f(y) Q(dy,x).
\end{align*}
\end{theorem}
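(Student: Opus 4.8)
The plan is to show that the smoothed jump kernel $\Qsmooth(\cdot,x)$ converges weakly (indeed in total variation sense against bounded continuous test functions) to $Q(\cdot,x)$ as $\varepsilon\to 0$, and then apply the definition of weak convergence to the fixed test function $f\in C^0_b(E,\R)$. The key observation is that in the expression
\[
\int_E f(y)\,\Qsmooth(dy,x)=\int_0^1\sum_{j=1}^n\Big(\sheavi\Big(\tfrac{u_0-q_{j-1}(x)}{\varepsilon}\Big)+\sheavi\Big(\tfrac{q_{j}(x)-u_0}{\varepsilon}\Big)\Big)\int_{[0,1]^{\dim(B_j(x))}}f(G_{j,x}(u))\,du\,du_0,
\]
only the inner coefficient functions depend on $\varepsilon$, while the $u$-integrals $\int_{[0,1]^{\dim(B_j(x))}}f(G_{j,x}(u))\,du$ are fixed, finite numbers (bounded by $\norm{f}_\infty$ since $G_{j,x}$ maps into $E$). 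So the whole thing reduces to understanding the $\varepsilon\to 0$ limit of the scalar coefficient $c_j^\varepsilon(x):=\int_0^1\big(\sheavi(\tfrac{u_0-q_{j-1}(x)}{\varepsilon})+\sheavi(\tfrac{q_j(x)-u_0}{\varepsilon})\big)\,du_0$.

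First I would analyse $c_j^\varepsilon(x)$ directly. By the Heaviside properties, $\sheavi(\tfrac{u_0-q_{j-1}}{\varepsilon})=1$ for $u_0>q_{j-1}+\varepsilon$ and $=0$ for $u_0<q_{j-1}-\varepsilon$, and symmetrically $\sheavi(\tfrac{q_j-u_0}{\varepsilon})=1$ for $u_0<q_j-\varepsilon$ and $=0$ for $u_0>q_j+\varepsilon$; both functions take values in $[0,1]$ everywhere. Hence, up to a set of $u_0$ of Lebesgue measure at most $4\varepsilon$ (the transition bands around $q_{j-1}$ and $q_j$), the integrand equals $\1_{[q_{j-1},q_j)}(u_0)$ plus terms bounded by $1$ on those bands. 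Therefore $|c_j^\varepsilon(x)-(q_j(x)-q_{j-1}(x))|\le 4\varepsilon$, and since $q_j(x)-q_{j-1}(x)=p_j(x)=Q(B_j(x),x)$, we get $c_j^\varepsilon(x)\to Q(B_j(x),x)$. Plugging this into the displayed formula and using the non-smoothed identity $\int_E f(y)Q(dy,x)=\sum_{j=1}^n p_j(x)\int_{[0,1]^{\dim(B_j(x))}}f(G_{j,x}(u))\,du$ from Assumption~\ref{ass:jumpkernel}, the difference is bounded by
\[
\Big|\int_E f(y)\,\Qsmooth(dy,x)-\int_E f(y)\,Q(dy,x)\Big|\le\sum_{j=1}^n|c_j^\varepsilon(x)-p_j(x)|\cdot\norm{f}_\infty\le 4n\varepsilon\,\norm{f}_\infty\xrightarrow[\varepsilon\to0]{}0.
\]

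A small technical wrinkle to address carefully: the overlapping transition bands. If $p_j(x)$ is very small (smaller than $2\varepsilon$), the bands around $q_{j-1}(x)$ and $q_j(x)$ overlap, and one could even have $\sheavi(\tfrac{u_0-q_{j-1}}{\varepsilon})+\sheavi(\tfrac{q_j-u_0}{\varepsilon})>1$ there; but this is harmless because the crude bound $0\le\sheavi(\tfrac{u_0-q_{j-1}}{\varepsilon})+\sheavi(\tfrac{q_j-u_0}{\varepsilon})\le 2$ on a band of total measure $\le 4\varepsilon$ still gives the same $O(\varepsilon)$ estimate. I expect this bookkeeping of the transition bands — and making sure the constant is uniform in $j$ and independent of how small the $p_j(x)$ are — to be the only point requiring genuine care; everything else is the dominated-convergence-style estimate above. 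I would finish by noting the bound is in fact uniform over all $x\in E$ once one knows $n$ is fixed, though the statement only asks for pointwise convergence in $x$.

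\begin{proof}
Fix $x\in E$ and $f\in C^0_b(E,\R)$. For $j\in\{1,\dots,n\}$ write $p_j(x)=Q(B_j(x),x)$, $q_0(x)=0$, $q_j(x)=p_1(x)+\dots+p_j(x)$, and set $a_j:=\int_{[0,1]^{\dim(B_j(x))}}f(G_{j,x}(u))\,du$, so that $|a_j|\le\norm{f}_\infty$ because $G_{j,x}$ takes values in $E$. By Assumption~\ref{ass:jumpkernel} we have $\int_E f(y)\,Q(dy,x)=\sum_{j=1}^n p_j(x)\,a_j$, and by construction
\[
\int_E f(y)\,\Qsmooth(dy,x)=\sum_{j=1}^n c_j^\varepsilon(x)\,a_j,\qquad c_j^\varepsilon(x):=\int_0^1\Big(\sheavi\Big(\tfrac{u_0-q_{j-1}(x)}{\varepsilon}\Big)+\sheavi\Big(\tfrac{q_j(x)-u_0}{\varepsilon}\Big)\Big)du_0.
\]
Since $\sheavi$ takes values in $[0,1]$, equals $0$ below $-1$ and $1$ above $1$, the integrand in $c_j^\varepsilon(x)$ equals $\1_{[q_{j-1}(x),q_j(x))}(u_0)$ except on the set $\{u_0:|u_0-q_{j-1}(x)|\le\varepsilon\}\cup\{u_0:|u_0-q_j(x)|\le\varepsilon\}$, which has Lebesgue measure at most $4\varepsilon$, and on that set the integrand lies in $[0,2]$ while $\1_{[q_{j-1}(x),q_j(x))}\in[0,1]$. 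Hence $|c_j^\varepsilon(x)-p_j(x)|\le 4\varepsilon$ for every $j$. Therefore
\[
\Big|\int_E f(y)\,\Qsmooth(dy,x)-\int_E f(y)\,Q(dy,x)\Big|\le\sum_{j=1}^n|c_j^\varepsilon(x)-p_j(x)|\,|a_j|\le 4n\varepsilon\,\norm{f}_\infty,
\]
which tends to $0$ as $\varepsilon\to0$.
\end{proof}
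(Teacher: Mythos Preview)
Your proof is correct and follows essentially the same route as the paper's: both write the difference as $\sum_{j=1}^n\big(\int_0^1[\sheavi((u_0-q_{j-1})/\varepsilon)+\sheavi((q_j-u_0)/\varepsilon)-\1_{[q_{j-1},q_j)}(u_0)]\,du_0\big)\,a_j$, bound $|a_j|\le\|f\|_\infty$, and estimate the bracketed integral by a constant times $\varepsilon$ (the paper gets the sharper constant $\tfrac{5}{8}$ by using the explicit polynomial form of $\sheavi$, whereas you use the crude band argument). One minor slip: on the transition bands the pointwise difference can be as large as $2$, so your bound should read $|c_j^\varepsilon(x)-p_j(x)|\le 8\varepsilon$ rather than $4\varepsilon$, which is immaterial for the conclusion.
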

\begin{proof}
It holds that
\begin{align*}
\lefteqn{\Big|\int_E f(y) (\Qsmooth(dy,x)-Q(dy,x))\Big|}\\
&=\Big|\sum_{j=1}^n \int_0^1 \Big(\sheavi\Big(\frac{u_0-q_{j-1}(x)}{\varepsilon}\Big)+\sheavi\Big(\frac{q_{j}(x)-u_0}{\varepsilon}\Big)-1_{[q_{j-1}(x),q_j(x))}(u_0)\Big)du_0\int_{[0,1]^{\dim(B_{j}(x))}} f(G_{j,x}(u))du\Big|\\
&\le \sum_{j=1}^n \int_0^1 \Big|\sheavi\Big(\frac{u_0-q_{j-1}(x)}{\varepsilon}\Big)+\sheavi\Big(\frac{q_{j}(x)-u_0}{\varepsilon}\Big)-1_{[q_{j-1}(x),q_j(x))}(u_0)\Big|du_0\int_{[0,1]^{\dim (B_{j}(x))}} \Big|f(G_{j,x}(u))\Big|du\,.
\end{align*}
For our concrete example of $\sheavi$ the first integral can be estimated 
by $\frac{5}{8}\varepsilon$.
Thus 
\begin{align*}
\Big|\int_E f(y) (\Qsmooth(dy,x)-Q(dy,x))\Big|
\le\frac{5\varepsilon n}{8}\|f\|_\infty\,,
\end{align*}
yielding the statement of the theorem. 
\end{proof}
Now, consider the example from Section \ref{subsubsec:C-L-multidim}. Here, a jump can
be either a jump in $x_1$-direction or a jump in $x_2$-direction, i.e.,
\begin{align*}
X_{T_j}=
\begin{cases}
X_{T_J-}+(Y_1,0) & \text{with probability } \frac{\lambda_1}{\lambda_1+\lambda_2},\\
X_{T_J-}+(0,Y_2) & \text{with probability } \frac{\lambda_2}{\lambda_1+\lambda_2}.
\end{cases}
\end{align*}
In this case we can find functions $G_1,G_2:[0,1]\to [0,\infty)$ such that 
  $Y_1 \stackrel{d}{\sim} G_1(\varTheta_1)$ and 
$Y_2\stackrel{d}{\sim} G_2(\varTheta_2)$ for uniform random variables $\varTheta_1,\varTheta_2$.
Hence,
\begin{align*}
\int_E f(y) Q(dy,(x_1,x_2))\approx 
\int_0^1 \int_{[0,1]^2} &
\sheavi\Big(\varepsilon^{-1}\Big(\frac{\lambda_1}{\lambda_1+\lambda_2}-u\Big)\Big)f\big(x_1+G_1(\vartheta_1),x_2\big)
\\&+
\sheavi\Big(\varepsilon^{-1}\Big(u-\frac{\lambda_1}{\lambda_1+\lambda_2}\big)\Big)f\Big(x_1,x_2+G_2(\vartheta_2)\big)d\vartheta_1\, d\vartheta_2\,du\,.
\end{align*}
\begin{remark}
If we consider, say, $i=100$ in \eqref{eq:iterate}, then we get a very high number of terms to be summed in the integral.
However, we always assume $\varepsilon$ to be very small, in particular,
we may assume that per jump at most two, and in most situations only one, of the terms 
$\sheavi(\varepsilon^{-1}(u-q_{j-1}(x)))+\sheavi(\varepsilon^{-1}(q_{j}(x)-u))$ are  nonzero.
\end{remark}

\subsection{Convergence}\label{sec:FellerCase}
In this section we prove a general convergence result for approximated versions
of PDMPs with smoothing as above. We will exploit results on Feller processes presented in
\citet[Chapter 19]{Kallenberg2002} and \citet[Chapters 4.2 and
4.8]{EthierKurtz1986}.
For the remainder of this section we make the following assumptions:
\begin{enumerate}[(i)]
\item $t^*(x)=\infty$ for all $x\in E$,
\item $\lambda\in C_b(E,\R)$, 
\item \label{it:ass-jump} for all $f\in C_b(E)$ the mapping $x\mapsto\int_E f(\bar x)Q(d\bar x,x)$ is
continuous.
\end{enumerate}

With this, we can utilise the following theorem.
\begin{theorem} \cite[Theorem 27.6]{davis1993} \label{th:feller}
If $t^*(x)=\infty$ for all $x\in E$ and for all $\lambda\in C_b(E,\R)$, and if
the mapping $x\mapsto\int_E f(y)Q(dy,x)$ is continuous for all $f\in C_b(E,\R)$,
then the PDMP is a Feller process.
\end{theorem}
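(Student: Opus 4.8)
The plan is to verify directly that the transition operators $P_tf(x):=\E_x(f(X_t))$ form a Feller semigroup, i.e.\ that $P_t\big(C_0(E,\R)\big)\subseteq C_0(E,\R)$ (and likewise with $C_b(E,\R)$) for every $t\ge 0$ and that $\|P_tf-f\|_\infty\to 0$ as $t\to 0+$ for every $f\in C_0(E,\R)$; the strong Markov property is already supplied by Theorem~\ref{th:existence-pdmp}. The starting point is the renewal identity obtained by conditioning on the first jump time $T_1$ --- precisely the $n=1$ instance of the iteration carried out in Section~\ref{sec:fixed-point-approach}. Writing $\Lambda(x,t):=\int_0^t\lambda(\phi(x,s))\,ds$ and $(Q_yg):=\int_E g(\bar x)\,Q(d\bar x,y)$, and using $t^*(x)=\infty$ (no boundary jumps, so the survival function of $T_1$ is exactly $e^{-\Lambda(x,\cdot)}$), one gets
\[
P_tf(x)=e^{-\Lambda(x,t)}f(\phi(x,t))+\int_0^t\lambda(\phi(x,s))\,e^{-\Lambda(x,s)}\,\big(Q_{\phi(x,s)}P_{t-s}f\big)\,ds.
\]

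First I would introduce the truncation to at most $n$ jumps. Letting $N_t$ denote the number of jumps in $[0,t]$, set $P_t^{(n)}f(x):=\E_x\big(f(X_t)\,\1_{\{N_t\le n\}}\big)$; conditioning on $T_1$ shows $P_t^{(0)}f(x)=e^{-\Lambda(x,t)}f(\phi(x,t))$ and that $P_t^{(n)}f$ satisfies the displayed identity with $P_{t-s}f$ replaced by $P_{t-s}^{(n-1)}f$. By induction on $n$ each $P_t^{(n)}f$ lies in $C_b(E,\R)$: $(x,t)\mapsto\phi(x,t)$ is continuous, hence so is $(x,t)\mapsto\Lambda(x,t)$ by dominated convergence ($\lambda$ bounded and continuous), assumption~(iii) propagates continuity through the operator $g\mapsto Q_{\phi(x,s)}g$, and one further dominated-convergence argument handles the $s$-integral; boundedness is immediate since the kernel mass in the recursion is $\le 1$, so $\|P_t^{(n)}f\|_\infty\le\|f\|_\infty$. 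The same induction, using that $\phi$ maps the compact set $\{x\}\times[0,t]$ into a compact set and that $f$ (hence the whole right-hand side) is uniformly small off a large compact set, shows $P_t^{(n)}f\in C_0(E,\R)$ whenever $f\in C_0(E,\R)$.

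Next I would pass to the limit $n\to\infty$. Since $\lambda\le C_\lambda$, exactly as in the proof of Lemma~\ref{lem:Gnto0} the time $T_{n+1}$ stochastically dominates an $\mathrm{Erlang}(n+1,C_\lambda)$ variable, so
\[
\big\|P_tf-P_t^{(n)}f\big\|_\infty\le\|f\|_\infty\sup_{x\in E}\P_x(N_t\ge n+1)\le\|f\|_\infty\sum_{m>n}e^{-C_\lambda t}\frac{(C_\lambda t)^m}{m!}\xrightarrow[n\to\infty]{}0,
\]
so $P_tf$ is a uniform limit of functions in $C_0(E,\R)$ (resp.\ $C_b(E,\R)$) and hence lies there; this is the first half of the Feller property. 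For strong continuity at $0$, the renewal identity gives, for $f\in C_0(E,\R)$,
\[
\|P_tf-f\|_\infty\le\sup_{x\in E}\abs{f(\phi(x,t))-f(x)}+\|f\|_\infty\sup_{x\in E}\big(1-e^{-\Lambda(x,t)}\big)+C_\lambda t\,\|f\|_\infty,
\]
where the last two terms are $\le 2C_\lambda t\,\|f\|_\infty\to 0$.

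The hard part is the two ``uniform in $x$'' claims: that the $P_t^{(n)}f$ (and their limit) vanish at infinity, and that $\sup_{x\in E}\abs{f(\phi(x,t))-f(x)}\to 0$ as $t\to 0+$. Both are soft-analysis statements but genuinely use local compactness of $E$ together with the fact that the flow does not transport mass across a fixed compact set in arbitrarily short time: given $\varepsilon>0$, pick a compact $D$ with $|f|<\varepsilon$ off $D$; on $D$ one has $\phi(x,t)\to x$ uniformly by uniform continuity of $\phi$ on $D\times[0,1]$, while for $x$ sufficiently far from $D$ the trajectory $\phi(x,\cdot)$ stays off $D$ for small $t$, so $f(\phi(x,t))$ is again $<\varepsilon$, the intermediate ``collar'' being absorbed by shrinking $t$ further. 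Combining these with the estimates above shows that the PDMP is Feller; see \cite[Theorem~27.6]{davis1993} for the complete argument.
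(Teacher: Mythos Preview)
The paper does not prove this theorem at all; it simply records the statement and cites \cite[Theorem~27.6]{davis1993}. So there is no in-paper argument to compare against --- your sketch is already considerably more than the paper offers, and it concludes by pointing to the same reference.

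Your route via the first-jump renewal identity, the truncations $P_t^{(n)}$, and the Erlang tail bound is the standard one and is correct for the $C_b$-Feller property: continuity of $x\mapsto P_t^{(n)}f(x)$ follows exactly as you say, the uniform limit is continuous and bounded, and the strong-continuity estimate is fine (your ``collar'' argument can be made precise by noting that $\bigcup_{0\le s\le 1}\phi(D,-s)$ is compact, since it is the continuous image of $D\times[-1,0]$, and off this enlarged set the trajectory stays outside $D$ for all $t\in[0,1]$).

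There is, however, a genuine gap in the $C_0$ part of your argument. Assumption~(iii) says only that $y\mapsto Q_y g$ is \emph{continuous} for $g\in C_b$; it does \emph{not} say this map vanishes at infinity when $g\in C_0$. Hence the inductive step ``$P_{t-s}^{(n-1)}f\in C_0\Rightarrow Q_{\phi(x,s)}P_{t-s}^{(n-1)}f\to 0$ as $x\to\infty$'' is unjustified. A concrete obstruction: take $E=\R$, trivial flow, constant $\lambda$, and $Q(\cdot,y)$ uniform on $[-1,1]$; then $Q_yg=\tfrac12\int_{-1}^1 g$ is constant in $y$, assumption~(iii) holds, yet $P_tf(x)=e^{-\lambda t}f(x)+(1-e^{-\lambda t})\cdot\mathrm{const}$ does not lie in $C_0$. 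In Davis's terminology the conclusion is the $C_b$-Feller property, which is what your argument actually proves; the $C_0$ version would require an extra hypothesis on $Q$.
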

We give an example for a class of jump kernel which comprises the jump kernels of the one-dimensional 
examples in Section \ref{sec:examples} and which satisfies (\ref{it:ass-jump}).

\begin{example}
Let $E_k\subseteq \R$ be an interval for every $k\in K$ and let $f_Y$ be a 
bounded density
function on $\R$. Furthermore, let, for every $x=(k,y)\in E$ and every $A\in\cE$,  
$Q(A,(k,y))=\sum_{j\in K}\int_{(y-A)\cap E_j}f_Y(\bar y)d\bar y$.  
Then for every $f\in C_b(E,\R)$ it holds that 
\begin{align*}
\lefteqn{\left\vert\int_E f(x)Q(dx,(k,y_1))-\int_E f(x)Q(dx,(k,y_2))\right\vert}\\
&=\left\vert\sum_{j\in K}\int_\R \1_{E_j}(y_1-\bar y)f_j(y_1-\bar y)f_Y(\bar y)d\bar y
-\sum_{j\in K}\int_\R \1_{E_j}(y_2-\bar y)f_j(y_2-\bar y)f_Y(\bar y)d\bar y\right\vert\\
&\le\sum_{j\in K}\left\vert\int_\R \1_{E_j}(y_1-\bar y)f_j(y_1-\bar y)f_Y(\bar y)d\bar y
-\int_\R \1_{E_j}(y_2-\bar y)f_j(y_2-\bar y)f_Y(\bar y)d\bar y\right\vert\\
&\le\sum_{j\in K}\int_\R\vert \1_{E_j}(y_1-\bar y)f_j(y_1-\bar y)
-\1_{E_j}(y_2-\bar y)f_j(y_2-\bar y)\vert f_Y(\bar y)d\bar y\,.
\end{align*}
Since, by assumption, all $f_j$ are continuous and all $E_j$ are intervals,
it holds that $\vert \1_{E_j}(y_1-\bar y)f_j(y_1-\bar y)
-\1_{E_j}(y_2-\bar y)f_j(y_2-\bar y)\vert$ is bounded by $2\|f_j\|_\infty$ and
goes to zero as $y_1\to y_2$ for 
almost all $\bar y$.

Therefore, bounded convergence implies that the above sum converges to 0.
From this the desired continuity follows.
\end{example}

The generator of $X$ in the setup of the current section is given by
\begin{align}\label{eq:generator}
\mathcal{A}f(x)=\calX f(x)+\lambda(x)\int_E (f(\bar x)-f(x))Q(d\bar x,x)\,,\quad x\in E,
\end{align}
where for $x=(k,y)\in E$ we define $\calX f(x)$ by  $(\calX f)_k(y)=\frac{\partial}{\partial t}f_k(\phi_k(y,t))|_{t=0}$.
Note that for $f\in  C^1_b(E,\R)$ this means $(\calX f) (y)=g (y)\cdot\nabla f (y)$.
So the domain $\cD(\cA)$ of the generator consists of all functions in 
$C_b(E,\R)$ which are continuously differentiable along the trajectories of the flow on all components, cf.~\citet[page 8]{EthierKurtz1986}, and 
$C^1_b(E,\R)\subseteq \cD(\cA)$.
\begin{definition}[\mbox{\cite[Chapter 19]{Kallenberg2002}}]
Let $A$ be a closed linear operator with domain of definition $\cD(A)$. A {\em core} for $A$ is a linear subspace $D\subseteq \cD(A)$ such that the restriction $A|D$ has closure $A$.
\end{definition}
\begin{proposition}[\mbox{\cite[Proposition 19.9]{Kallenberg2002}}]
If $\cA$ is the generator of a Feller semigroup $(P_t)_{t\ge 0}$, 
then any dense,
$(P_t)_{t\ge 0}$-invariant subspace $D\subseteq \cD(\cA)$ is a core for $\cA$.
\end{proposition}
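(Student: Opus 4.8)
The plan is to reduce the statement to the classical resolvent criterion for cores and then to exploit the $(P_t)_{t\ge 0}$-invariance of $D$ to verify the hypothesis of that criterion. Write $L$ for the Banach space on which the Feller semigroup acts; then $\cA$ is densely defined and closed, for every $\lambda>0$ the operator $\lambda-\cA$ is a bijection from $\cD(\cA)$ onto $L$ with bounded inverse $R_\lambda=(\lambda-\cA)^{-1}=\int_0^\infty e^{-\lambda t}P_t\,dt$, and the contraction property gives the dissipativity estimate $\norm{(\lambda-\cA)g}\ge\lambda\norm{g}$ for $g\in\cD(\cA)$. Let $\bar\cA$ be the closure of the restriction $\cA|_D$, so $\bar\cA\subseteq\cA$. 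Because $\bar\cA g=\cA g$ on $\cD(\bar\cA)$, the same estimate makes $\lambda-\bar\cA$ injective with closed range: if $g_n\in\cD(\bar\cA)$ and $(\lambda-\bar\cA)g_n$ converges, then $(g_n)$ is Cauchy, hence so is $(\bar\cA g_n)$, and closedness of $\bar\cA$ places the limit in the range. Hence it suffices to show that, for some $\lambda>0$, the range of $\lambda-\bar\cA$ is dense in $L$; for then it is all of $L$, and surjectivity of $\lambda-\bar\cA$ combined with $\bar\cA\subseteq\cA$ and injectivity of $\lambda-\cA$ forces $\cD(\bar\cA)=\cD(\cA)$, i.e.\ $\bar\cA=\cA$.

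Second, I would prove this density by showing that every $f\in D$ lies in the range of $\lambda-\bar\cA$ (enough, since $D$ is dense in $L$). The idea is to approximate the resolvent applied to $f$. Fix $\lambda>0$ and $f\in D$, and write $R_\lambda f=\int_0^\infty e^{-\lambda t}P_tf\,dt$ as a limit of Riemann sums $S_m=\sum_i e^{-\lambda t_i^{(m)}}P_{t_i^{(m)}}f\,\delta_i^{(m)}$ over growing intervals with vanishing mesh. By invariance each $P_{t_i^{(m)}}f\in D$, so $S_m\in D\subseteq\cD(\cA)$, and $S_m\to R_\lambda f$ in $L$ by strong continuity and integrability of $t\mapsto e^{-\lambda t}\norm{P_tf}$. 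Since $f\in\cD(\cA)$ and $\cA$ commutes with $P_t$ on $\cD(\cA)$, we also get $\cA S_m=\sum_i e^{-\lambda t_i^{(m)}}P_{t_i^{(m)}}(\cA f)\,\delta_i^{(m)}\to R_\lambda(\cA f)$. As $\bar\cA$ is the graph-closure of $\cA|_D$, this shows $R_\lambda f\in\cD(\bar\cA)$ with $\bar\cA R_\lambda f=R_\lambda\cA f$, whence $(\lambda-\bar\cA)R_\lambda f=R_\lambda(\lambda f-\cA f)=f$, the last equality by the resolvent identity $R_\lambda(\lambda-\cA)f=f$, valid because $f\in\cD(\cA)$. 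Thus $D$ is contained in the range of $\lambda-\bar\cA$, which finishes the proof.

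Since the statement is classical I do not anticipate a real obstacle; the only step demanding care is the operator-valued integral — establishing the Laplace-transform formula for $R_\lambda$, the norm convergence of the Riemann sums, and the interchange $\cA\int_0^\infty e^{-\lambda t}P_tf\,dt=\int_0^\infty e^{-\lambda t}P_t\cA f\,dt$ for $f\in\cD(\cA)$. Should one wish to avoid integrals altogether, an equivalent route runs through Hahn--Banach: if $(\lambda-\cA)(D)$ were not dense, choose a nonzero bounded functional $\mu$ vanishing on it; then for each $f\in D$ invariance gives $P_tf\in D$, so $u(t):=\langle\mu,P_tf\rangle$ obeys $u'(t)=\langle\mu,\cA P_tf\rangle=\lambda\langle\mu,P_tf\rangle=\lambda u(t)$, and boundedness of $u$ with $\lambda>0$ forces $u(0)=\langle\mu,f\rangle=0$ for all $f\in D$, contradicting density.
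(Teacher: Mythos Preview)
The paper does not supply a proof of this proposition at all: it is quoted verbatim from \cite[Proposition~19.9]{Kallenberg2002} and used as a black box in the proof of Proposition~\ref{prop:core}. Your argument is correct and is the standard one (resolvent criterion plus invariance of $D$ to push the Riemann sums for $R_\lambda f$ into $D$), essentially what one finds in Kallenberg or in \cite[Chapter~1, Proposition~3.3]{EthierKurtz1986}; the alternative Hahn--Banach route you sketch is likewise valid.
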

\begin{proposition}\label{prop:core} Under the assumptions made in this section, and for $\cA$ being defined as in 
\eqref{eq:generator}, it is true that $C_b^\infty(E,\R)$ is a core for $\cA$.
\end{proposition}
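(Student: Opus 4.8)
The plan is to reduce the assertion to a core property of the deterministic flow semigroup and then invoke \cite[Proposition 19.9]{Kallenberg2002}. Write $\cA=\calX+\cB$ with the jump operator $\cB f(x)=\lambda(x)\int_E\bigl(f(\bar x)-f(x)\bigr)\,Q(d\bar x,x)$. Since $\lambda$ is bounded and $Q(\cdot,x)$ is a probability measure, $\cB$ is a \emph{bounded} linear operator, $\|\cB f\|_\infty\le 2\|\lambda\|_\infty\|f\|_\infty$, and by the continuity hypothesis (\ref{it:ass-jump}) it maps continuous functions to continuous functions. Hence $\cD(\cA)=\cD(\calX)$, the graph norms of $\cA$ and $\calX$ on this common domain are equivalent, and consequently a subspace of $\cD(\cA)$ is a core for $\cA$ if and only if it is a core for $\calX$. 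It therefore suffices to prove that $C^\infty_b(E,\R)$ is a core for $\calX$. This reduction is what makes the argument work even though $\lambda$ is assumed only continuous: the full transition semigroup $(P_t)$ need not map $C^\infty_b(E,\R)$ into itself, because the survival factors $\exp\!\bigl(-\int_0^t\lambda(\phi(x,s))\,ds\bigr)$ are only as smooth as $\lambda$, whereas the flow semigroup below does preserve $C^\infty_b(E,\R)$.

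Observe that $\calX$ is the generator of the Feller semigroup $P^0_tf(x)=f(\phi(x,t))$: this is the transition semigroup of the PDMP with the same flow $\phi$ but without jumps ($\lambda\equiv 0$), which is a Feller process by Theorem \ref{th:feller} — its hypotheses hold because $t^\ast(x)=\infty$ by assumption and the jump-continuity condition is then vacuous — and whose generator is \eqref{eq:generator} with $\lambda\equiv 0$, i.e.\ $\calX$; moreover $C^1_b(E,\R)\subseteq\cD(\calX)$ and hence $C^\infty_b(E,\R)\subseteq\cD(\calX)$. By \cite[Proposition 19.9]{Kallenberg2002} it now suffices to check that $C^\infty_b(E,\R)$ is dense in the underlying space and is $P^0_t$-invariant for every $t\ge 0$. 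Density is routine (componentwise, using $E_k=E_k^\circ\cup\partial^1_{\phi_k}E_k$, one extends a given function continuously to an open neighbourhood of $E_k$ in $\R^{d(k)}$ and mollifies, obtaining elements of $C^\infty_b$ that converge uniformly).

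For the invariance, fix $f\in C^\infty_b(E,\R)$ and $t\ge 0$; then $P^0_tf$ is given componentwise by $y\mapsto f_k(\phi_k(y,t))$, so one must show that $y\mapsto\phi_k(y,t)$ is $C^\infty$ with all partial derivatives bounded uniformly in $y\in E_k$ for each fixed $t$. This is the one place where the smoothing of Section \ref{sec:smoothing} is used: the flows $\phi_k$ come from autonomous ODEs whose coefficients $\odecoeff_k$ have been regularised to be $C^\infty$ (indeed as smooth as desired, by choosing a $C^\infty$-Heaviside function) with bounded derivatives of every order. Then $y\mapsto\phi_k(y,t)$ is $C^\infty$ by smooth dependence of ODE solutions on initial data, its partial derivatives of any fixed order solve the associated higher-order variational equations, whose coefficients are bounded, and a Gronwall estimate bounds these derivatives uniformly in $y$ for each fixed $t$; composing with $f_k\in C^\infty_b(E_k)$ then gives $P^0_tf\in C^\infty_b(E,\R)$. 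By \cite[Proposition 19.9]{Kallenberg2002}, $C^\infty_b(E,\R)$ is a core for $\calX$, hence, by the reduction in the first paragraph, for $\cA$. I expect this invariance step — the uniform control of flow derivatives of arbitrarily high order — to be the main obstacle, as it is the only point where the regularity supplied by the smoothing procedure, rather than merely the continuity hypotheses of this section, genuinely enters; the remaining ingredients are soft facts about bounded perturbations of generators and about cores of Feller semigroups.
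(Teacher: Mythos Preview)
Your argument is correct and takes a genuinely different route from the paper. The paper applies \cite[Proposition 19.9]{Kallenberg2002} directly to the full PDMP semigroup $(P_t)$, claiming that $P_t$ leaves $C_b^\infty(E,\R)$ invariant because ``differentiation and application of $P_t$ commute, i.e., $\frac{\partial^k}{\partial x^k}P_tf=P_t\frac{\partial^k}{\partial x^k}f$''. You instead decompose $\cA=\calX+\cB$, observe that $\cB$ is bounded so that $\cD(\cA)=\cD(\calX)$ with equivalent graph norms and hence the two operators share the same cores, and then apply \cite[Proposition 19.9]{Kallenberg2002} only to the deterministic flow semigroup $P^0_tf=f(\phi(\cdot,t))$, whose invariance of $C_b^\infty$ you verify via the higher-order variational equations together with Gronwall. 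What this buys you is that only the ODE coefficient $\odecoeff$ needs to be smooth, while $\lambda$ may remain merely continuous as assumed in the section; by contrast, the paper's commutation identity holds for spatially homogeneous (L\'evy-type) semigroups but not for a general PDMP---already for the pure flow one has $\partial_y\bigl(f_k(\phi_k(y,t))\bigr)=f_k'(\phi_k(y,t))\,\partial_y\phi_k(y,t)$, which equals $P^0_t(\partial_y f_k)(y)$ only when $\odecoeff_k$ is constant---so the paper's invariance step is not justified in the stated generality. Your one extra hypothesis, that the smoothed $\odecoeff$ lie in $C_b^\infty$, amounts to choosing a $C^\infty$-Heaviside function rather than the $C^2$ example \eqref{eq:sheavi}; you flag this explicitly, and it is harmless.
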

\begin{proof}
We certainly have that $C_b^\infty(E,\R)$ is a dense subspace of $C_b(E,\R)$.
Furthermore, the transition semigroup satisfies 
$P_t\colon  C_b(E,\R)\to C_b(E,\R)$
for all $t\in [0,\infty)$, see \cite[p.76]{davis1993}, since the PDMP is Feller by 
Theorem
\ref{th:feller}.

We have to prove that $C^\infty_b(E,\R)$ is invariant under $(P_t)_{t\in
[0,\infty)}$. We show this by proving that, for all $k\in \N$, $P_t  C_b^k(E,\R)\subseteq  C_b^k(E,\R)$.
For $k=0$ this is just the Feller property.
Since all derivatives are bounded in the $\sup$-norm, differentiation and
application of $P_t$ commute, i.e., $\frac{\partial^k}{\partial
x^k}P_tf=P_t\frac{\partial^k}{\partial x^k}f\in C_b(E,\R)$ for all $k\in\N$.
Consequently, $ C_b^\infty(E,\R)$ is a core for $\mathcal{A}$.
\end{proof}
\begin{theorem}[{\cite[Theorem 19.25]{Kallenberg2002}}]\label{th:kallenberg}
Let $X$ be a Feller process in $E$ with semigroup $(P_t)_{t\ge 0}$ and generator 
$\mathcal{A}$ with domain $\mathcal{D}(\cA)$, and for all $n\in\N$ let 
$X^n$ be Feller processes in $E$ with semigroups $(P^n_{t})_{t\ge 0}$ and generators $\mathcal{A}^n$ with domains $\mathcal{D}(\cA^n)$. 
Let $D$ be a core for $\mathcal{A}$. Then the following statements are equivalent: 
\begin{enumerate}[(i)]
\item\label{it:kallenberg1} for every $f\in D$ there exists a sequence $(f^n)_{n\in \N}$ with
 $f^n\in\mathcal{D}(\cA^n)$ for all $n\in\N$ and such that  
$f^n\to f$ and $\mathcal{A}^n f^n\to\mathcal{A}f$ uniformly as $n\to\infty$,
\item for all  $t>0 $ we have $P^n_t\to P_t$ as $n\to \infty$ in the strong operator topology,
\item for every $f\in C_0(E,\R)$ and every $t_0\in(0,\infty)$ it holds that $P^n_tf\to P_t f$  as $n\to \infty$
uniformly for $t\in[0,t_0]$,
\item \label{it:kallenberg4} if $X_0^n\stackrel{d}{\to} X_0$ in $E$, then $X^n\stackrel{d}{\to} X$ in $D([0,\infty),E)$.
\end{enumerate}
\end{theorem}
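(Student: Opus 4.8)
This is a standard approximation theorem for Feller semigroups (essentially the Trotter--Kurtz theorem combined with weak-convergence theory for c\`adl\`ag Markov processes), so strictly speaking the cleanest justification is the reference \cite[Theorem 19.25]{Kallenberg2002}; what follows is how I would reconstruct its proof. Write $R_\lambda=(\lambda-\cA)^{-1}$ and $R_\lambda^n=(\lambda-\cA^n)^{-1}$ for $\lambda>0$, which are bounded operators with $\|R_\lambda\|,\|R_\lambda^n\|\le\lambda^{-1}$ since the semigroups are contractive. The plan is to show that each of (i), (ii), (iii) is equivalent to the single statement that $R_\lambda^n\to R_\lambda$ strongly on $C_0(E,\R)$ for some (equivalently every) $\lambda>0$, and then to close the loop through (iv).

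First I would prove (i) $\Leftrightarrow$ strong resolvent convergence. For ``$\Rightarrow$'', given $f\in D$ with $f^n$ as in (i), set $g=(\lambda-\cA)f$ and $g^n=(\lambda-\cA^n)f^n$, so $g^n\to g$ uniformly and hence $R_\lambda^n g=f^n+R_\lambda^n(g-g^n)\to f=R_\lambda g$; since $D$ is a core, $(\lambda-\cA)D$ is dense in $C_0(E,\R)$, and the uniform bound $\|R_\lambda^n\|\le\lambda^{-1}$ propagates convergence to all of $C_0(E,\R)$. For ``$\Leftarrow$'', given resolvent convergence, for $f\in D$ put $g=(\lambda-\cA)f$ and $f^n=R_\lambda^n g$; then $f^n\to R_\lambda g=f$ and $\cA^n f^n=\lambda f^n-g\to\lambda f-g=\cA f$, both uniformly. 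Next I would use that strong resolvent convergence implies (iii): this is the classical Trotter--Kurtz step, proved by comparing $P_t^n$ and $P_t$ with their Yosida approximations $\exp(t\lambda(\lambda R_\lambda^n-I))$ and $\exp(t\lambda(\lambda R_\lambda-I))$ and invoking $\|e^{tB}-e^{tB'}\|\le t\|B-B'\|$ for generators of contraction semigroups, which yields convergence uniform in $t$ on compact intervals. Finally (iii) $\Rightarrow$ (ii) is immediate, and (ii) $\Rightarrow$ strong resolvent convergence follows from $R_\lambda^n g=\int_0^\infty e^{-\lambda t}P_t^n g\,dt$ by dominated convergence. This settles the mutual equivalence of (i), (ii), (iii).

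Then I would pass to path space. For (iii) $\Rightarrow$ (iv): the finite-dimensional distributions of $X^n$ are determined by the law of $X_0^n$ and the operators $P_{t_{j+1}-t_j}^n$, so they converge to those of $X$ by (ii) together with $X_0^n\stackrel{d}{\to}X_0$; relative compactness of $(X^n)$ in $D([0,\infty),E)$ follows from the standard generator criterion for Feller processes (cf.\ \citet[Chapter 4]{EthierKurtz1986}), using that $E$ is locally compact and separable, that each $X^n$ has a \cadlag{} Feller version, and the uniform control of $\cA^n f^n$ supplied by (i). Finite-dimensional convergence plus tightness give $X^n\stackrel{d}{\to}X$ in $D([0,\infty),E)$. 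For (iv) $\Rightarrow$ (ii): taking $X_0^n=X_0=x$ deterministic, convergence in $D([0,\infty),E)$ forces $X_t^n\stackrel{d}{\to}X_t$ for every $t$ outside the at most countable set of fixed discontinuities of $X$, and since a Feller process has no fixed discontinuities this holds for all $t$; hence $P_t^n f(x)\to P_t f(x)$ for every $x$ and every $f\in C_0(E,\R)$, and this pointwise convergence is then upgraded to $\|\cdot\|_\infty$-convergence using equicontinuity and uniform decay at infinity of $\{P_t^n f\}_n$ inherited from the Feller property.

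The main obstacles are not any single computation but the ``soft'' passages: turning strong resolvent convergence into semigroup convergence that is uniform on compact time intervals, and, in the reverse direction (iv) $\Rightarrow$ (ii), extracting uniform-in-$x$ operator convergence from weak convergence of the processes. The tightness step in (iii) $\Rightarrow$ (iv) is likewise delicate, but for the present purposes it is entirely covered by the general Ethier--Kurtz relative-compactness criterion for solutions of martingale problems with converging generators, so I would simply cite it rather than redo it.
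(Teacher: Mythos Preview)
The paper does not give its own proof of this theorem: it is quoted verbatim as \cite[Theorem 19.25]{Kallenberg2002} and used as a black box (the subsequent Theorem~\ref{thm:convPDMP} simply verifies condition~\eqref{it:kallenberg1} and invokes the equivalence). So there is nothing in the paper to compare your argument against; you have correctly identified that the appropriate ``proof'' here is just the citation, which you also give.

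Your reconstruction is a reasonable outline of the standard Trotter--Kato/Kurtz argument and would be fine as a roadmap. One step is genuinely soft, though: in (iv) $\Rightarrow$ (ii) you pass from pointwise convergence $P_t^n f(x)\to P_t f(x)$ to uniform convergence by appealing to ``equicontinuity and uniform decay at infinity of $\{P_t^n f\}_n$ inherited from the Feller property''. The Feller property of each individual $P_t^n$ does not by itself give equicontinuity or tightness that is \emph{uniform in $n$}; Kallenberg handles this via the one-point compactification of $E$ and a compactness argument rather than by a direct equicontinuity claim. If you ever need to write this out in full, that is the place where an extra idea is required.
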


\begin{remark}\label{rem:skorokhod}
The notion of weak convergence of processes 
in Item \eqref{it:kallenberg4} needs an explanation. Here,
$D([0,\infty),E)$ is the space of c\`adl\`ag functions, equipped with the
Skorokhod topology, see \citet[p.~118]{EthierKurtz1986}.
With this topology, $D([0,\infty),E)$ is a Borel subset of a Polish space and 
for a sequence $(X^n)_{n\in \N}$ of $D([0,\infty),E)$-valued random variables 
(i.e., processes in $E$ with c\`adl\`ag paths), and a 
$D([0,\infty),E)$-valued random variable $X$ we have 
$X^n\stackrel{d}{\to} X$ if and only if $\lim_{n\to \infty} \E(F(X^n))=\E(F(X))$ for 
all bounded Skorokhod continuous functions $D([0,\infty),E)\to \R$,
see
\citet[Section 6]{KurtzProtter1996} or 
\citet[Chapter 3]{EthierKurtz1986}.
We do not wish to go into the details of the notion of 
Skorokhod continuity. It suffices to mention that from \cite[Section 8, Example  8.1]{KurtzProtter1996}
we know that
for given continuous functions 
$f_1:E\times [0,\infty)\to\R^d$ and $f_2: [0,\infty)\to [0,\infty)$, and fixed 
$t\in [0,\infty)$, the following functionals exhibit this property:
\begin{align*}
F_1(\omega)&=f_1(\omega(t),t)&&(\mbox{for}\ \omega\in D([0,\infty),E)),\\
F_2(\omega)&=\int_0^t f_2(t-s)f_1(\omega(s),s) ds&&(\mbox{for}\ \omega\in D([0,\infty),E)).
\end{align*}
\end{remark}
\begin{lemma}\label{lem:skorokhod}
Let $f:E\to \R$ be continuous
 and bounded, 
then the functional 
\begin{align*}
F_3(\omega)&=\int_0^\infty e^{-\delta s}f(\omega(s)) ds
\qquad(\mbox{for}\ \omega\in D([0,\infty),E))
\end{align*}
is Skorokhod continuous.
\end{lemma}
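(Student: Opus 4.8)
The goal is to show that the functional $F_3(\omega)=\int_0^\infty e^{-\delta s}f(\omega(s))\,ds$ on $D([0,\infty),E)$ is Skorokhod continuous, i.e., that $\omega_n\to\omega$ in the Skorokhod topology implies $F_3(\omega_n)\to F_3(\omega)$. The natural strategy is to approximate $F_3$ by truncated functionals $F_3^{(t)}(\omega)=\int_0^t e^{-\delta s}f(\omega(s))\,ds$ and use the truncation tail together with the already-available Skorokhod continuity of integral functionals of the type $F_2$ from Remark~\ref{rem:skorokhod}.

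\textbf{Step 1: Uniform tail control.} Since $f$ is bounded, say by $\|f\|_\infty$, for every $\omega\in D([0,\infty),E)$ we have $\bigl|\int_t^\infty e^{-\delta s}f(\omega(s))\,ds\bigr|\le \|f\|_\infty\,\delta^{-1}e^{-\delta t}$, a bound independent of $\omega$. Hence given $\eta>0$ we may choose $t_0$ so that this tail is $<\eta/3$ for every path simultaneously, and it remains to handle $\int_0^{t_0}e^{-\delta s}f(\omega(s))\,ds$.

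\textbf{Step 2: Reduce to the known case.} Apply the functional $F_2$ from Remark~\ref{rem:skorokhod} with the choices $f_1(x,s)=f(x)$ (continuous on $E\times[0,\infty)$ since $f$ is continuous) and $f_2(r)=e^{-\delta(t_0-r)}$ — wait, more directly: $F_2(\omega)=\int_0^{t_0} f_2(t_0-s)f_1(\omega(s),s)\,ds$ with $f_2(u)=e^{-\delta(t_0-u)}$ gives $\int_0^{t_0}e^{-\delta s}f(\omega(s))\,ds$ exactly, so the truncated functional is itself Skorokhod continuous by the cited result in \cite[Section 8]{KurtzProtter1996}. Then if $\omega_n\to\omega$ in $D([0,\infty),E)$, the restriction to $[0,t_0]$ also converges appropriately and $F_3^{(t_0)}(\omega_n)\to F_3^{(t_0)}(\omega)$; combined with the uniform tail estimate from Step~1, a standard three-$\varepsilon$ argument gives $|F_3(\omega_n)-F_3(\omega)|\le \eta/3+\eta/3+|F_3^{(t_0)}(\omega_n)-F_3^{(t_0)}(\omega)|\to 2\eta/3$, hence $\le\eta$ for $n$ large. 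Since $\eta$ is arbitrary, $F_3(\omega_n)\to F_3(\omega)$.

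\textbf{Main obstacle.} The only delicate point is the interplay between the infinite time horizon and the Skorokhod topology: convergence in $D([0,\infty),E)$ is metrized so that one essentially has convergence on compact time intervals $[0,t_0]$ away from the (at most countably many) discontinuity times of the limit, and one must be sure that the truncation point $t_0$ can be taken to be such a continuity point of $\omega$ (or that the $F_2$-type result already accommodates this). Since $e^{-\delta s}$ weights later times negligibly and the tail bound in Step~1 is genuinely uniform over all paths, this causes no real trouble — one may, if desired, choose $t_0$ among the continuity points of $\omega$, which form a co-countable and hence dense set. Apart from this bookkeeping, the proof is a routine truncation argument layered on top of the quoted Skorokhod continuity of $F_2$.
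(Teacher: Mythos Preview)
Your proposal is correct and follows essentially the same route as the paper: truncate the time integral, use the uniform tail bound $\int_t^\infty e^{-\delta s}\|f\|_\infty\,ds$ to control the remainder, and invoke the Skorokhod continuity of the $F_2$-type functional from Remark~\ref{rem:skorokhod} for the finite-horizon piece, then combine via the triangle inequality. The paper phrases the argument via the Skorokhod metric rather than sequences (yielding an $\veps/4+\veps/4+\veps/2$ split instead of your $\eta/3$ split), but the substance is identical; your caveat about choosing $t_0$ to be a continuity point of $\omega$ is not needed, since the cited $F_2$ result already holds for every fixed $t$.
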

\begin{proof} Let $\sigma$ denote the Skorokhod metric on $D([0,\infty),E)$.
Let $\varepsilon>0$. There exists $t>0$ such that $\int_t^\infty e^{-\delta s}\|f\|_\infty ds <\frac{\veps}{4}$. By Skorokhod continuity of $F_2$ 
there exists an $\eta>0$ such that for all $\omega_1,\omega_2\in D([0,\infty),E)$ it
holds that, if $\sigma(\omega_1,\omega_2)<\eta$ then $\left|\int_0^te^{-\delta
s}f(\omega_1(s)) ds-\int_0^te^{-\delta s}f(\omega_2(s))
ds\right|<\frac{\veps}{2}$. Therefore,
\begin{align*}
|F_3(\omega_1)-F_3(\omega_2)|&=\left|\int_0^\infty e^{-\delta s}f(\omega_1(s)) ds-\int_0^\infty e^{-\delta s}f(\omega_2(s)) ds\right|\\
&\le \left|\int_0^te^{-\delta s}f(\omega_1(s)) ds-\int_0^te^{-\delta s}f(\omega_2(s)) ds\right|+2 \int_t^\infty e^{-\delta s}\|f\|_\infty ds<\veps.
\end{align*} 
\end{proof}

We remark that a function $f:E\to\R$ is continuous if and only if $f_k:E_k\to\R$ is continuous for all $k$. In particular, 
every indicator function of a component $\{k\}\times E_k$ is continuous.

We are in the position to show that cost functionals indeed 
commute with weak limits of PDMPs.
\begin{lemma}
Let $X$ be a PDMP and $(X^n)_{n\in\N}$ be a sequence of PDMPs on the same state space $E$ 
and with the same cemetery $E^c$, and let $\ell:E\to \R$ and $\Psi:E\to \R$
be a running reward function and a terminal cost function, respectively.
Assume that both $\ell$ and $\Psi$ are continuous and bounded. 
Assume further that $X^n_0=x$ for all $n\in \N$
and $X_0=x$, and  $X^n\stackrel{d}{\to}X$ in $D([0,\infty),E)$.

Then  
\begin{align*}
\E_{x}\left(\int_0^\tau e ^{-\delta t} \ell(X^n_t) dt+e^{-\delta\tau}\Psi(X^n_\tau)\right)
\to
\E_{x}\left(\int_0^\tau e ^{-\delta t} \ell(X_t) dt+e^{-\delta\tau}\Psi(X_\tau)\right)
\end{align*}
as $n\to \infty$.
\end{lemma}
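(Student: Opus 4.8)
The plan is to collapse the cost functional into a single infinite-horizon discounted integral of a continuous bounded functional of the path, and then apply Lemma~\ref{lem:skorokhod} together with the weak-convergence criterion recalled in Remark~\ref{rem:skorokhod}. The structural fact that makes this work is that the cemetery $E^c$ is absorbing: by Definition~\ref{def:cemetery} the intensity vanishes and the flow is constant on $E^c$, and under the standing assumption $t^*\equiv\infty$ of this section a PDMP can enter $E^c$ only through a jump. Hence, with $\tau=\inf\{t\ge 0\colon X_t\in E^c\}$, either $\tau=\infty$, or $\tau$ is a jump time with $X_\tau\in E^c$ and $X_t=X_\tau$ for all $t\ge\tau$. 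Together with $\ell|_{E^c}\equiv 0$ and $\Psi|_{E\setminus E^c}\equiv 0$ this yields the pathwise identity
\begin{align*}
\int_0^\tau e^{-\delta t}\ell(X_t)\,dt+e^{-\delta\tau}\Psi(X_\tau)
&=\int_0^\infty e^{-\delta t}\ell(X_t)\,dt+\int_0^\infty \delta e^{-\delta t}\Psi(X_t)\,dt\\
&=\int_0^\infty e^{-\delta t}h(X_t)\,dt,
\end{align*}
where $h:=\ell+\delta\Psi\colon E\to\R$ is continuous and bounded; since the processes $X^n$ have the same cemetery $E^c$, the same identity holds verbatim with $X$ replaced by $X^n$.

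Given this reduction, I would first verify the pathwise identity in detail: the case $\tau=\infty$ (both sides vanish), the case $X_0\in E^c$ (so $\tau=0$), and the generic case, where $X_t=X_\tau$ on $[\tau,\infty)$ follows from Definition~\ref{def:cemetery} ($\lambda\equiv 0$ on $E^c$ forbids a further jump, and the flow is constant thereafter) and where the boundary term rewrites as $e^{-\delta\tau}\Psi(X_\tau)=\int_\tau^\infty \delta e^{-\delta t}\Psi(X_\tau)\,dt$. Next I would apply Lemma~\ref{lem:skorokhod} with $f=h$ to conclude that $F_3(\omega)=\int_0^\infty e^{-\delta s}h(\omega(s))\,ds$ is Skorokhod continuous on $D([0,\infty),E)$, noting in addition that it is bounded by $\|h\|_\infty/\delta$. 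Finally, since $X^n\stackrel{d}{\to}X$ in $D([0,\infty),E)$, the criterion from Remark~\ref{rem:skorokhod} gives $\E_x(F_3(X^n))\to\E_x(F_3(X))$, and undoing the rewriting on both sides produces the claimed convergence.

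The bulk of the argument is bookkeeping. The one step that needs genuine care is the absorption/freezing identity --- in particular the assertion that, under the section's standing assumptions, the process reaches $E^c$ only via a jump, so that $X_\tau\in E^c$ and the path is frozen at $X_\tau$ on $[\tau,\infty)$; this is exactly what converts the boundary term $e^{-\delta\tau}\Psi(X_\tau)$ into the tail integral and lets the two reward pieces be merged into the single integrand $h$. On the measure-theoretic side nothing is delicate: $F_3$ is bounded, so no uniform-integrability or truncation is required, and the equivalence in Remark~\ref{rem:skorokhod} applies directly.
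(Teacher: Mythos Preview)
Your proposal is correct and follows essentially the same approach as the paper: both exploit $\ell|_{E^c}\equiv 0$, $\Psi|_{E\setminus E^c}\equiv 0$, and the absorbing property of $E^c$ to rewrite the cost functional as an infinite-horizon discounted integral, then invoke Lemma~\ref{lem:skorokhod} and the weak-convergence criterion from Remark~\ref{rem:skorokhod}. The only cosmetic difference is that you merge the two pieces into a single integrand $h=\ell+\delta\Psi$ and apply Lemma~\ref{lem:skorokhod} once, whereas the paper treats the $\ell$-integral and the $\Psi$-tail-integral separately; your version is slightly cleaner and your explicit discussion of why the path is frozen at $X_\tau$ (via $t^*\equiv\infty$ and Definition~\ref{def:cemetery}) fills in a step the paper leaves implicit.
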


\begin{proof}
Recall that $\ell\equiv 0$  on $E^c$, and $\Psi\equiv 0$  on $E\backslash 
E^c$, so that 
$\int_0^\infty e^{-\delta s}\ell(\omega(s)) ds=\int_0^\tau e^{-\delta
s}\ell(\omega(s)) ds$ and $\int_0^\infty \delta e^{-\delta s}\Psi(\omega(s))ds=\int_\tau^\infty \delta e^{-\delta s}\Psi(\omega(s))ds$. 
Thus by Lemma \ref{lem:skorokhod} the mappings  $\omega\mapsto \int_0^\tau e^{-\delta
s}\ell(\omega(s)) ds$ and $\omega\mapsto \int_\tau^\infty\delta e^{-\delta
s}\Psi(\omega(s)) ds$ are  Skorokhod continuous. 

Moreover, if $\omega$ is a path of the 
PDMPs, then it holds that $\omega(s)=\omega(\tau)$
for all $s\ge\tau$, such that 
$\int_\tau^\infty \delta e^{-\delta s}\Psi(\omega(s))ds
=e^{-\delta \tau}\Psi(\omega(\tau))$.
This completes the proof.
\end{proof}
Also, finite time ruin probabilities, i.e., the probability of the PDMP
reaching the cemetery before a given time horizon $t$, 
commute with weak limits, as we show next.
\begin{lemma}
Let $X$ be a PDMP and $(X^n)_{n\in\N}$ be a sequence of PDMPs on the same state space $E$ 
and with the same cemetery $E^c$. 
Assume further that $X^n_0=x$ for all $n\in \N$
and $X_0=x$, and  $X^n\stackrel{d}{\to}X$ in $D([0,\infty),E)$.

Then $\lim_{n\to\infty }\P_x(X^n_t\in E^c)=\P_x(X_t\in E^c)$ for all $t\ge 0$.
\end{lemma}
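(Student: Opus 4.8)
The plan is to realise $\P_x(X_t\in E^c)$ as the expectation of a bounded path functional that is Skorokhod-continuous off a $\P_x$-null set, and then to invoke the continuous mapping theorem for weak convergence on $D([0,\infty),E)$ together with the hypothesis $X^n\stackrel{d}{\to}X$. Since $K^c$ is finite and each component $\{k\}\times E_k$ is clopen in $E$, the indicator $\1_{E^c}=\sum_{k\in K^c}\1_{\{k\}\times E_k}$ is a bounded continuous function on $E$, and $\P_x(X_t\in E^c)=\E_x(\1_{E^c}(X_t))$, with the analogous identity for $X^n$. Hence it suffices to treat the bounded measurable functional $F\colon D([0,\infty),E)\to\R$, $F(\omega)=\1_{E^c}(\omega(t))$.

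Next I would locate the discontinuity set $D_F$ of $F$. If $\omega$ is continuous at the fixed time $t$, then $\omega_n\to\omega$ in the Skorokhod metric forces $\omega_n(t)\to\omega(t)$, and continuity of $\1_{E^c}$ then yields $F(\omega_n)\to F(\omega)$; thus $F$ is continuous at every such $\omega$, so $D_F\subseteq\{\omega\colon\omega(t-)\neq\omega(t)\}$. Since a PDMP path follows the continuous flow between its jump times $T_1\le T_2\le\cdots$, one has $\{X_{t-}\neq X_t\}\subseteq\bigcup_n\{T_n=t\}$. Under the standing assumptions of this section ($t^\ast(x)=\infty$ for all $x$ and $\lambda\in C_b(E,\R)$), Definition \ref{def:PDMP}(iii) shows that, conditionally on $(T_n,X_{T_n})$, the law of $T_{n+1}$ is atomless on $(T_n,\infty]$ (it has the continuous survival function $s\mapsto\exp(-\int_0^s\lambda_k(\phi_k(y,\tau))d\tau)$, or is degenerate at $+\infty$); hence $\P_x(T_{n+1}=t)=0$ for all $n$, and $\P_x(T_1=t)=0$ likewise, so $\P_x(X\in D_F)\le\P_x(X_{t-}\neq X_t)\le\sum_n\P_x(T_n=t)=0$.

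Finally, because $X^n\stackrel{d}{\to}X$ in $D([0,\infty),E)$ and $F$ is bounded and measurable with $\P_x(X\in D_F)=0$, the continuous mapping theorem (Portmanteau) gives $\E_x(F(X^n))\to\E_x(F(X))$, i.e.\ $\lim_{n\to\infty}\P_x(X^n_t\in E^c)=\P_x(X_t\in E^c)$. The step I expect to be the main obstacle is the Skorokhod bookkeeping in the middle paragraph: evaluation at a deterministic time is continuous only at paths with no jump there, so the argument genuinely rests on the limit PDMP $X$ having no fixed-time discontinuity, and this is precisely where the section's assumption $t^\ast\equiv\infty$ (ruling out deterministic boundary-hitting times and guaranteeing atomless inter-jump laws) enters; without it the conclusion could fail.
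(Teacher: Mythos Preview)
Your proof is correct and follows the same strategy as the paper's: both observe that $\1_{E^c}$ is continuous on $E$ (since $E^c$ is a union of whole components $\{k\}\times E_k$, each clopen) and then pass the expectation of the evaluation functional $\omega\mapsto\1_{E^c}(\omega(t))$ through the weak limit. The paper handles the Skorokhod-continuity of this functional by appealing to Remark~\ref{rem:skorokhod} (and the Kurtz--Protter reference therein); you instead verify directly that the discontinuity set of the functional is contained in $\{\omega:\omega(t-)\neq\omega(t)\}$ and show $\P_x(X_{t-}\neq X_t)=0$ because, under the standing assumption $t^\ast\equiv\infty$, each inter-jump time has an atomless law on $(0,\infty)$. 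This extra step makes your argument self-contained and pinpoints where the hypothesis $t^\ast\equiv\infty$ enters, but the overall approach is the same as the paper's.
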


\begin{proof}
Consider a functional of the same form as $F_1$ in Remark \ref{rem:skorokhod},
with $f_1=\1_{E^c}$. Since the cemetery is the union of only entire $(\{k\}\times E_k)$, 
and is therefore a union of connected components of $E$,
the indicator function of the cemetery is continuous.
Therefore if we define 
$\psi(x,t)=\P_x(\tau\le t)=\P_x(X_t\in E^c)=\E_x(\1_{E^c}(X_t))$ and 
$\psi^n(x,t)=\P_x(\tau^n\le t)=\P_x(X^n_t\in E^c)=\E_x(\1_{E^c}(X^n_t))$, $n\in \N$, we have
$\lim_{n\to\infty}\psi^n(x,t)=\psi(x,t)$ for all $x\in E$ and for all $t\ge 0$.
\end{proof}

The following theorem specifies conditions under which Theorem \ref{th:kallenberg} is applicable in the PDMP setting. 
\begin{theorem}\label{thm:convPDMP}
Let $X$ be a Feller PDMP with local characteristics $(\phi,\lambda,Q)$ and let $X^n$, $n\in \N$, be Feller PDMPs with local characteristics $(\phi^n,\lambda^n,Q^n)$.
Further, let the following assumptions hold: 
\begin{enumerate}[(i)]
\item
$\odecoeff^n\to \odecoeff$ and $\lambda^n\to \lambda$ as $n\to\infty$, 
uniformly in $x\in E$,
\item for all $f\in C_b^\infty(E,\R)$, 
\begin{align}\label{eq:convMeasure}
\lim_{n\to\infty}\sup_{x\in E}\left|\int_E f(y) Q^n(dy,x)-\int_E f(y) Q(dy,x)\right|= 0,
\end{align}
\item $X_0^n\stackrel{d}{\to} X_0$ in $E$.
\end{enumerate}
Then $X^n\stackrel{d}{\to} X$ in $D([0,\infty),E)$.
\end{theorem}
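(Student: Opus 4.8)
The plan is to deduce the statement from Theorem~\ref{th:kallenberg} by verifying its condition~\eqref{it:kallenberg1} for a convenient core. By Proposition~\ref{prop:core}, $D:=C_b^\infty(E,\R)$ is a core for the generator $\mathcal{A}$ of $X$. Since $X$ and each $X^n$ are Feller PDMPs whose flows come from autonomous ODEs with coefficients $\odecoeff$ and $\odecoeff^n$, their generators $\mathcal{A}$ and $\mathcal{A}^n$ all have the form \eqref{eq:generator}, and the discussion following \eqref{eq:generator} shows that $C_b^1(E,\R)$—hence $C_b^\infty(E,\R)$—is contained in $\mathcal{D}(\mathcal{A})$ and in every $\mathcal{D}(\mathcal{A}^n)$. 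Consequently, for $f\in D$ we may take the constant approximating sequence $f^n:=f$: then $f^n\to f$ uniformly is trivial, $f^n\in\mathcal{D}(\mathcal{A}^n)$ for all $n$, and it only remains to show that $\mathcal{A}^n f\to\mathcal{A}f$ uniformly on $E$.

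\textbf{Uniform convergence of the generators.} Fix $f\in C_b^\infty(E,\R)$. Using $(\calX f)(x)=\odecoeff(x)\cdot\nabla f(x)$ for $C^1_b$ functions, we write, for every $x\in E$,
\begin{align*}
\mathcal{A}^n f(x)-\mathcal{A}f(x)
&=\big(\odecoeff^n(x)-\odecoeff(x)\big)\cdot\nabla f(x)
+\big(\lambda^n(x)-\lambda(x)\big)\int_E\big(f(\bar x)-f(x)\big)\,Q^n(d\bar x,x)\\
&\quad+\lambda(x)\int_E f(\bar x)\,\big(Q^n(d\bar x,x)-Q(d\bar x,x)\big),
\end{align*}
where in the last integral we used $Q^n(E,x)=Q(E,x)=1$ to replace $f(\bar x)$ by $f(\bar x)-f(x)$ without changing its value—this is the one point that genuinely matters, since it is what makes hypothesis \eqref{eq:convMeasure} (which controls $\int_E f\,dQ^n$, not $\int_E(f-f(x))\,dQ^n$) directly applicable. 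The first summand is bounded in absolute value by $\|\odecoeff^n-\odecoeff\|_\infty\,\|\nabla f\|_\infty$, the second by $2\|\lambda^n-\lambda\|_\infty\,\|f\|_\infty$, and the third by $\|\lambda\|_\infty\sup_{x\in E}\big|\int_E f(\bar x)\,Q^n(d\bar x,x)-\int_E f(\bar x)\,Q(d\bar x,x)\big|$. The first two bounds tend to $0$ by the assumed uniform convergences $\odecoeff^n\to\odecoeff$ and $\lambda^n\to\lambda$ together with boundedness of $f$ and $\nabla f$, and the third tends to $0$ by \eqref{eq:convMeasure}, since $f\in C_b^\infty(E,\R)$. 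Hence $\sup_{x\in E}|\mathcal{A}^n f(x)-\mathcal{A}f(x)|\to 0$, which is exactly \eqref{it:kallenberg1} of Theorem~\ref{th:kallenberg}.

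\textbf{Conclusion and main obstacle.} By Theorem~\ref{th:kallenberg}, \eqref{it:kallenberg1} implies \eqref{it:kallenberg4}; combining the latter with the assumption $X_0^n\stackrel{d}{\to}X_0$ in $E$ gives $X^n\stackrel{d}{\to}X$ in $D([0,\infty),E)$, as claimed. I expect no deep obstacle here: once the core $C_b^\infty(E,\R)$ from Proposition~\ref{prop:core} is available, the argument is essentially bookkeeping around the generator formula \eqref{eq:generator}. The only places requiring a little care are (a) confirming that $C^1_b(E,\R)$ sits in $\mathcal{D}(\mathcal{A}^n)$ uniformly in $n$, which follows because all $\mathcal{A}^n$ share the structural form \eqref{eq:generator}; and (b) the cancellation of the $-f(x)$ term in the jump part via $Q^n(E,\cdot)=Q(E,\cdot)=1$, which reduces the jump estimate precisely to \eqref{eq:convMeasure}.
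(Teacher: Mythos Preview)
Your proof is correct and follows essentially the same route as the paper: identify $C_b^\infty(E,\R)$ as a core via Proposition~\ref{prop:core}, take the constant sequence $f^n=f$, and verify condition~\eqref{it:kallenberg1} of Theorem~\ref{th:kallenberg} by bounding $\mathcal{A}^n f-\mathcal{A}f$ uniformly. Your three-term decomposition (splitting off $\lambda^n-\lambda$ and $Q^n-Q$ directly, and using $Q^n(E,\cdot)=Q(E,\cdot)=1$ to drop the $-f(x)$ in the last term) is a slightly cleaner rearrangement than the paper's, but the argument is otherwise identical.
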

\begin{proof}
Let $\mathcal{D}(\cA^n)$, $n\in\N$, and $\mathcal{D}(\cA)$ be the domains of 
the generators $\cA^n$, $n\in\N$, and $\cA$, corresponding to $X^n$ and $X$, respectively. For $f^n\in\mathcal{D}(\cA^n)$ we have
\begin{align*}
\mathcal{A}^n f^n(x)&=\calX^n f^n(x)+\lambda^n(x)\int_E (f^n(y)-f^n(x))Q^n(x,dy),\\
(\calX^n f^n)(x)&=(\odecoeff^n)(x)\cdot \nabla (f^n)(x).
\end{align*}
By Proposition \ref{prop:core}, $D=C_b^\infty(E,\R)$ is a core for all generators involved.
For every $f\in D$ we set $f^n=f$ for all $n\in\N$, such that trivially
$f^n\to f$ as $n\to \infty$. 
Next, observe that we have for all $n\in \N$,
\begin{align}
\nonumber&\vert\mathcal{A}^n f(x)-\mathcal{A}f(x)\vert\leq
\vert \odecoeff^n(x)\cdot\nabla f(x) - \odecoeff(x)\cdot\nabla f(x) \vert\\
\nonumber&+\left\vert \lambda^n(x)\int_E (f(y)-f(x))Q^n(dy,x)-\lambda(x)\int_E (f(y)-f(x))Q(dy,x)\right\vert\\
\nonumber&=\vert (\odecoeff^n(x)-\odecoeff(x))\cdot\nabla f(x)\vert
+\left\vert\lambda^n(x)\int_E (f(y)-f(x))Q^n(dy,x)-\lambda(x)\int_E (f(y)-f(x))Q(dy,x)\right\vert\\
\label{eq:firstsecondterm1}&\leq \|\odecoeff^n-\odecoeff\|_\infty
\|\nabla f\|_\infty+\|f\|_\infty\left\vert \lambda^n(x) \int_E Q^n(dy,x)-\lambda(x)\int_E Q(dy,x)\right\vert\\
\label{eq:thirdterm1}&+\left\vert \lambda^n(x) \int_E f(y) Q^n(dy,x)-\lambda(x)\int_E f(y) Q(dy,x)\right\vert.
\end{align}
Since $Q^n$, $n\in\N$, and $Q$ are probability measures on $(E,\mathcal{B}(E))$, and since, by assumption, $\odecoeff^n\to \odecoeff$ and $\lambda^n\to \lambda$ uniformly in $x\in E$, 
the terms in \eqref{eq:firstsecondterm1} converge to zero. The term in \eqref{eq:thirdterm1} can be estimated as follows,
\begin{align*}
&\left\vert \lambda^n(x) \int_E f(y) Q^n(dy,x)-\lambda(x)\int_E f(y) Q(dy,x)\right\vert\\
&\leq \|\lambda^n\|_\infty\left|\int_E f(y) Q^n(dy,x)-\int_E f(y) Q(dy,x)\right|+\left|\int_E f(y)Q(dy,x)\right|\|\lambda^n-\lambda\|_\infty.
\end{align*}
The latter expression tends to zero, since for all $x\in E$ it was assumed that \eqref{eq:convMeasure} holds, and since $\lambda^n\to \lambda$ uniformly in $x\in E$.
Thus, Item \eqref{it:kallenberg1} of Theorem \ref{th:kallenberg} holds. This implies that Item \eqref{it:kallenberg4} of Theorem \ref{th:kallenberg} holds.
The latter is equivalent to the assertion of this theorem.
\end{proof}
\begin{remark}
Note that in the Feller case we can move to
another external state only due to a purely random jump, i.e., a jump determined by 
$Q^n$ for $n\in\N$ or $Q$. Therefore, if we assume
uniform convergence of the local characteristics across all state components, and
in particular also $Q^n \to Q$ in the sense of \eqref{eq:convMeasure}, then the result of Theorem \ref{thm:convPDMP} still holds.
\end{remark}
Since uniform convergence of the local characteristics  and the assumption that
$t^\ast (x)=\infty$ are essential in the proof of Theorem \ref{thm:convPDMP}, we need an alternative argument for situations
with an active boundary or for situations in
which a smooth approximation fails. A prototypical univariate example for both
cases is a drift of the form $g(x)=c\,\1_{\{x\le b\}}$ for some $b\in \R$. Here one
faces either a discontinuity or a subdivision of $\R$ into two state components, i.e.,
$\R=\{x\in\R\colon x\le b\}\cup\{x\in\R\colon x> b\}$, with a continuous drift on each component.
For a specific example, we find a method for dealing with this
particular situation in the next section.

\section{Application to the Cram\'er-Lundberg model with loan}\label{sec:DE}
In this section we apply our smoothing technique to the example presented in Subsection \ref{subsubsec:CL-loan} and
calculate the quantity of interest using different numerical integration methods.
In this setup, $\phi_1$ solves the ODE
$\frac{\partial}{\partial t}\phi_1(y,t)=\odecoeff_1(\phi_1(y,t))$
$\forall y\in E_1$ and $\forall t\in \R$, with
\begin{align*}
\odecoeff_1(y)=\begin{cases}
c &\mbox{if}\ y\in(0,\infty),\\
c+\rho y&\mbox{if}\ y\in(-\frac{c}{\rho},0],\\
0 &\mbox{if}\ y\in(-\infty,-\frac{c}{\rho}].
\end{cases}
\end{align*}

In the setup of Subsection \ref{subsubsec:CL-loan}, 
the quantity of interest is the expected value of discounted future dividend payments until the time of ruin.
The cemetery $E^c$ is given by $E^c=(\{2\}\times E_2)\cup(\{3\}\times E_3)$,
the running reward $\ell$ is given by $\ell_1\equiv 0, \ell_4\equiv c$, and the terminal cost  is
$\Psi(x)=0$ for $x\in E^c$.
For $x\in E$, $t\ge0$, let
\begin{align*}
L(t,x)=\int_0^t e^{-\delta s}\ell(\phi(s,x))\,ds.
\end{align*}
Since $\odecoeff$ is not differentiable in $0$ and $t^*(x)<\infty$ for all $x=(1,y)$ with $y\in E_1$, 
we replace $\odecoeff$ by a smoothed version and we also modify $\ell$ accordingly.
For $\varepsilon>0$ let
\begin{align*}
\odecoeff_1^{\veps}(y)=
\begin{cases}
c	&\mbox{if}\  y\in(\veps,b-\veps],\\
\frac{c (b-y)^3 \left(15 \veps(y-b)+6 (b-y)^2+10 \veps^2\right)}{\veps^5}	&\mbox{if}\  y \in(b-\veps,b),\\
c+\rho y	&\mbox{if}\  y\in(-\frac{c}{\rho},-\veps),\\
c+\frac{\rho(y+3 \veps )(y-\veps )^3}{16 \veps ^3}	&\mbox{if}\  y\in[-\veps,\veps],\\
0			&\mbox{if}\  y\in(-\infty,-\frac{c}{\rho}]\cup[b,\infty).
\end{cases}
\end{align*}
Observe that $\odecoeff_1^\varepsilon \in C^2(\R)$, that $\lim_{y\nearrow b}\odecoeff_1^\veps (y)=0$ and that $\odecoeff_1^\veps\geq 0$.
For $\varepsilon>0$ define the PDMP $X^\veps$ so that for all $y\in \R$ its flow $\phi_1^\veps(\cdot,y)$ is the solution to the ODE 
$\frac{d}{dt}\phi_1^\veps(t,y)=\odecoeff^\veps(\phi_1^\veps(t,y))$ with $\phi_1^\veps(0,x)=x$.
Apart from that all specifications are the same as for the original PDMP $X$.
In addition, we replace
$\ell_1$ by
\begin{align*}
\ell_1^\veps(y)=c\,\sheavi\!\left(\frac{y-b+\veps}{\veps}\right),
\end{align*}
where $h$ can be chosen as in \eqref{eq:sheavi} and we define
\begin{align*}
L^\veps(t,x)=\int_0^t e^{-\delta s}\ell^\veps(\phi^\veps(s,x))\,ds.
\end{align*} 
We aim at computing $\mathcal{G}^{i-1}\mathcal{H}$ for the smoothed process, 
in order to observe how \eqref{eq:iterate} simplifies in this example.
By the definition of the cemetery, $\mathcal{G}^{i-1}\mathcal{H}(x)=0$ for all $x=(k,z)\in E$ with 
$k\in\{2,3\}$. 
For $x=(1,z)$ with $z\in E_1$, any jumps bigger than $z+c/\rho$ lead to the cemetery, so we only need to integrate
over jump sizes up to $z+c/\rho$. Thus, we get that
\begin{align*}
\mathcal{G}V(x)
=\mathcal{G}V((1,z))
&=\int_0^{\infty}f_W(t,x)e^{-\delta t}\int_E V(y)Q(dy,\phi(x,t))dt\\
&=\int_0^{\infty}f_W(t,x)e^{-\delta t}\int_0^{z+c/\rho} V((1,z-y))dF_Y(y)dt\,.
\end{align*}
Moreover, since $\lambda$ is constant on $E_1$ it holds for all $x=(1,z)$ with $z\in E_1$, $t\ge 0$ that 
$f_W(t,x)=\lambda_Ne^{-\lambda_Nt}$, where $\lambda_N$ is as in Section \ref{subsubsec:C-L}.
For $x=(1,z)$ with $z\in E_1$ we get 
\begin{multline}\label{eqn:integral-de1}
\mathcal{G}^{i-1}\mathcal{H}(x)=
\int_{t_1=0}^\infty\lambda_N e^{-(\lambda_N+\delta)t_1}\int_{y_1=0}^{\chi_{1^-}+\frac{c}{\rho}}\cdots
\int_{t_{i-1}=0}^\infty\lambda_N e^{-(\lambda_N+\delta)t_{i-1}}\int_{y_{i-1}=0}^{\chi_{(i-1)^-}+\frac{c}{\rho}}\\
\int_{t_i=0}^\infty\lambda_N e^{-\lambda_N t_i}L^\veps(t_i,\chi_{(i-1)})dt_i\,dF_Y(y_{i-1})dt_{i-1}\cdots dF_Y(y_1)dt_1,
\end{multline}
where the functions $\chi_{j^-},\chi_{j}$, $j=1,2,\ldots,i-1$ 
solve $\chi_{j^-}=\phi_1^\veps(t_j,\chi_{j-1})$ and $\chi_{j}=\chi_{j^-}-y_j$.

Thus $\chi_{j^-}$ depends on $t_1,\ldots,t_j$ and $y_1,\ldots,y_{j-1}$, whereas 
$\chi_{j}$ depends on $t_1,\ldots,t_j$ and $y_1,\ldots,y_j$. However, this
dependence has been suppressed in \eqref{eqn:integral-de1} for the sake
of readability.
\begin{assumption}\label{ass:density}
 The jump distribution admits a density $f_Y=F_Y'$, with $f_Y\in C_0^2$.
\end{assumption}
In what follows, suppose that Assumption \ref{ass:density} holds.
A variable substitution $t_j=-\ln(v_j)$ and $y_j=(\chi_{j^-}+\frac{c}{\rho})z_j$,
where $v_j\in[0,1],\;z_j\in[0,1]$, 
$\hat{\chi}_j(v_1,\ldots,v_j,z_1,\ldots,z_{j})=\chi_{j^-}(t_1,\ldots,t_j,y_1,\ldots,y_{j})$.
We then put
$$\nu(v_1,\ldots,v_i,z_1,\ldots,z_{i-1})=L^\veps(-\ln (v_i),\hat{\chi}_{i-1}(v_1,\ldots,v_{i-1},z_1,\ldots,z_{i-1})),$$
which leads to
\begin{multline}\label{eq:DE_Integrand}
\mathcal{G}^{i-1}\mathcal{H}(x)=\int_{v_1=0}^1\cdots\int_{v_i=0}^1\int_{z_1=0}^1\cdots\int_{z_{i-1}=0}^1
\lambda_N^i \left[\prod_{j=1}^{i-1}v_j^{\delta+\lambda_N-1}\right] v_i^{\lambda_N-1}\nu(v_1,\ldots,v_i,z_1,\ldots,z_{i-1})\\
\times\left[\prod_{j=1}^{i-1}f_Y\!\left(z_j\left(\hat{\chi}_j+\frac{c}{\rho}\right)\right)\left(\hat{\chi}_j+\frac{c}
{\rho}\right)dz_j\right]\prod_{j=1}^i dv_j.
\end{multline}
Due to the recursive structure of the functions $\hat{\chi}_1,\hat{\chi}_2,\ldots,\hat{\chi}_{i-1}$, the  
Jacobi matrix of the substitution has lower triangular shape, 
such that its determinant is the 
product of the diagonal elements.
For being able to reasonably apply (\ref{eqpausva}) we need to bound 
the Hessian of the integrand.
If for example the jump size distribution is the Gamma distribution with parameters $\alpha,\beta>0$, 
i.e.,~$dF_Y(y)=\frac{y^{\alpha-1}\beta^\alpha e^{-\beta y}}{\Gamma(\alpha)}dy$, then this boils down to the condition 
$\beta\geq 3$ and $\delta+\lambda>3$, 
which implies that the integrand is bounded in $\bf{0}$. In the original problem statement this corresponds to an additional integrability condition on 
the jump size distribution.
Finally, for  $x\in E$ of the form $x=(4,b)$ we have 
\[
\mathcal{G}^{i-1} \mathcal{H}((4,b))=\int_0^\infty \lambda e^{-\lambda t}e^{-\delta t}
\int_0^{b+c/\rho} \mathcal{G}^{i-2} \mathcal{H}((1,b-y))dF_Y(y)dt\,.
\] 
\begin{remark}
In Section \ref{sec:FellerCase} the stability, with respect to the smoothing parameter $\varepsilon$ of the considered functional
of the process, is dealt with in
a fairly general setting. Unfortunately, because of the discontinuity of the drift $\odecoeff$ in the present example, we cannot achieve uniform convergence
of the smoothed drift around the barrier level $b$, whereas point-wise convergence is achieved.
\end{remark}
\begin{theorem}
In the setup of this section, the following assertion holds true. There exists  $C>0$ 
such that $\|V^\veps- V\|_\infty \le C \veps$.
\end{theorem}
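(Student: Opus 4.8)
The plan is to compare $V$ and $V^\veps$ through the fixed-point representation of Section \ref{sec:fixed-point-approach}. I would first describe the unsmoothed process by the \emph{capped} flow $\phi_1(y,t)=\min(y+ct,b)$ on the premium region (equivalently, by the discontinuous drift $\odecoeff_1=c\,\1_{\{\cdot<b\}}$ and the reward $\ell$ collected only while the surplus equals $b$), so that $t^\ast\equiv\infty$ for the original process as well; here $\Psi\equiv0$. Then both $\mathcal{G}$ and the operator $\mathcal{G}^\veps$ associated with $(\phi^\veps,\lambda,Q)$ and $\ell^\veps$ are, by the computation in the proof of Lemma \ref{lem:Gnto0}, contractions on the bounded functions for $\|\cdot\|_\infty$ with modulus $q:=\lambda_N/(\lambda_N+\delta)<1$, while $\|V\|_\infty,\|V^\veps\|_\infty\le c/\delta$. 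Subtracting $V=\mathcal{H}+\mathcal{G}V$ and $V^\veps=\mathcal{H}^\veps+\mathcal{G}^\veps V^\veps$ gives $V^\veps-V=(\mathcal{H}^\veps-\mathcal{H})+(\mathcal{G}^\veps-\mathcal{G})V+\mathcal{G}^\veps(V^\veps-V)$, and since $\|V^\veps-V\|_\infty<\infty$ this rearranges to
\[
\|V^\veps-V\|_\infty\ \le\ \frac{1}{1-q}\Big(\|\mathcal{H}^\veps-\mathcal{H}\|_\infty+\big\|(\mathcal{G}^\veps-\mathcal{G})V\big\|_\infty\Big).
\]
So it suffices to bound both right-hand terms by a constant times $\veps$.

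The common tool is the purely deterministic estimate $\sup_{x\in E,\,t\ge0}|\phi^\veps(x,t)-\phi(x,t)|\le C_1\veps$. Since $\odecoeff^\veps$ and $\odecoeff$ agree off the two $\veps$-windows $(-\veps,\veps)$ and $(b-\veps,b)$, both flows start at the same point, and the drift is non-negative (strictly positive on $(-c/\rho,b)$), every trajectory is monotone increasing and enters each window at most once: on $(-\veps,\veps)$ both drifts exceed $c/2$ and differ by $O(\veps)$, so the two trajectories acquire only a time lag $O(\veps^2)$ — hence a lasting spatial gap $O(\veps^2)$ — while traversing it; on $(b-\veps,b]$ the original trajectory runs up to the cap and the smoothed one approaches $b$ from below, so both remain in an interval of length $\veps$. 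This already settles $(\mathcal{G}^\veps-\mathcal{G})V$: writing $(\mathcal{G}^\veps-\mathcal{G})V(x)=\int_0^\infty\lambda_N e^{-(\lambda_N+\delta)t}\big(\int_0^\infty V(\phi^\veps(x,t)-u)\,dF_Y(u)-\int_0^\infty V(\phi(x,t)-u)\,dF_Y(u)\big)dt$ with $V\equiv0$ on the cemetery, I would note that although $V$ itself need not be Lipschitz near the ruin level $-c/\rho$, the map $z\mapsto\int_0^\infty V(z-u)\,dF_Y(u)$ is: jump sizes are positive and, by Assumption \ref{ass:density}, admit a density $f_Y\in C^2_0\subseteq C^1$ with $\|f_Y'\|_{L^1}<\infty$, so this map is the convolution of the bounded function $V$ with $f_Y$ and hence globally Lipschitz with constant $\le\|V\|_\infty\|f_Y'\|_{L^1}$. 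Combined with the flow estimate this yields $\|(\mathcal{G}^\veps-\mathcal{G})V\|_\infty\le C_3\veps$.

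The $\mathcal{H}$-term is the heart of the matter. Because the deterministic flow from any non-ruined state never reaches the cemetery, Fubini gives $\mathcal{H}(x)=\int_0^\infty e^{-(\lambda_N+\delta)s}\ell(\phi(x,s))\,ds$ and the analogous formula with $\ell^\veps,\phi^\veps$, so $|\mathcal{H}^\veps(x)-\mathcal{H}(x)|\le\int_0^\infty|\ell^\veps(\phi^\veps(x,s))-\ell(\phi(x,s))|\,ds$, which I split as $|\ell^\veps(\phi^\veps)-\ell^\veps(\phi)|+|\ell^\veps(\phi)-\ell(\phi)|$. The second summand is nonzero only while $\phi(x,s)\in(b-2\veps,b)$ — a time interval of length $\le 2\veps/c$ on which it is $\le c$ — hence contributes $\le 2\veps$. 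The first summand vanishes unless the segment between $\phi^\veps(x,s)$ and $\phi(x,s)$ meets the support $(b-2\veps,b)$ of $(\ell^\veps)'$: while $\phi(x,s)<b$ this occurs only on a time interval of length $O(\veps)$ on which the summand is $\le 2c$, contributing $O(\veps)$; and once $\phi(x,s)=b$, say for $s\ge s_0$, the summand equals $c\big(1-\sheavi(1-u(s)/\veps)\big)$ with $u(s):=b-\phi^\veps(x,s)\downarrow 0$, so substituting $s\leftrightarrow u$ (valid since $u$ is strictly decreasing with $ds=-du/\odecoeff^\veps(b-u)$) turns this part into $\int_0^{u(s_0)}c\,\frac{1-\sheavi(1-u/\veps)}{\odecoeff^\veps(b-u)}\,du$. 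Here the design of the smoothing pays off: on $(0,\veps]$ one has $\odecoeff^\veps(b-u)\ge c\,u^3/\veps^3$, and since the explicit $\sheavi$ of \eqref{eq:sheavi} satisfies $\sheavi'(1)=\sheavi''(1)=0$, it follows that $1-\sheavi(1-u/\veps)\le C_\sheavi (u/\veps)^3$ on $(0,\veps]$; thus the integrand is bounded by the $\veps$-independent constant $C_\sheavi$ and this contribution is $\le C_\sheavi\veps$. Altogether $\|\mathcal{H}^\veps-\mathcal{H}\|_\infty\le C_2\veps$, and inserting $C_2\veps$ and $C_3\veps$ into the displayed inequality yields $\|V^\veps-V\|_\infty\le C\veps$ with $C=(C_2+C_3)/(1-q)$.

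The main obstacle is exactly this last point. Away from $b$ nothing happens ($\ell^\veps=\ell$ and in fact $|\phi^\veps-\phi|=O(\veps^2)$ there), but near $b$ the smoothed flow only approaches the barrier asymptotically, so the crude bound $|\ell^\veps(\phi^\veps)-\ell^\veps(\phi)|\le\mathrm{Lip}(\ell^\veps)\,|\phi^\veps-\phi|=O(\veps^{-1})\cdot O(\veps)$ integrated over an unbounded time horizon is useless; what rescues the estimate — and pins the rate at exactly $\veps$ rather than better — is the \emph{matched} cubic degeneracy of $\odecoeff^\veps$ and of $1-\sheavi$ at $b$, which keeps $\big(1-\sheavi(1-u/\veps)\big)/\odecoeff^\veps(b-u)$ bounded uniformly in $\veps$ after reparametrising by $u$. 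For the same reason — uniform convergence $\odecoeff^\veps\to\odecoeff$ fails in a neighbourhood of $b$ — Theorem \ref{thm:convPDMP} cannot be invoked here.
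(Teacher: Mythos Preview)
Your proof follows the same architecture as the paper's: the decomposition $V^\veps-V=(\mathcal{H}^\veps-\mathcal{H})+(\mathcal{G}^\veps-\mathcal{G})V+\mathcal{G}^\veps(V^\veps-V)$ together with the contraction modulus $\lambda_N/(\lambda_N+\delta)$ is precisely what underlies the paper's \eqref{eq:VVeps}--\eqref{eq:rhs3}, and both proofs rest on the flow estimate $\sup_{t\ge0}|\phi^\veps_1(t,y)-\phi_1(t,y)|=O(\veps)$. The differences are in how the two resulting terms are controlled. For the $\mathcal{G}$-term the paper obtains the Lipschitz constant $\|V_1\|_\infty\|f_Y\|_\infty$ for $z\mapsto\int_0^{z+c/\rho} V_1(z-u)f_Y(u)\,du$ by isolating the endpoint contribution, whereas you use $\|V\|_\infty\|f_Y'\|_{L^1}$ via the convolution derivative (your version needs Assumption~\ref{ass:density} rather than mere boundedness of $f_Y$, but is cleaner). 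For the $\mathcal{H}$-term the paper bounds $|L-L^\veps|$ uniformly in $s$ by $c\int_{\theta^\veps_{b-2\veps}}^{\tilde\theta_b}e^{-\delta r}\,dr$, reducing the matter to a hitting-time difference of order $\veps$; you instead apply Fubini first and then control the unbounded tail $\int_{\tilde\theta_b}^\infty c\big(1-\sheavi(1-u(s)/\veps)\big)\,ds$ by the $u$-substitution, matching the cubic vanishing of $1-\sheavi(1-\cdot/\veps)$ against the cubic degeneracy of $\odecoeff_1^\veps$ at $b$. Your route makes the mechanism behind the $\veps$-rate at the barrier fully explicit and avoids the paper's auxiliary monotonicity claim $\odecoeff_1^\veps\ge\odecoeff_1$ on $(-c/\rho,b-\veps)$, which is used for $L^\veps\ge L$ but is in fact delicate for the specific smoothing chosen (e.g.\ at $y=0$ one computes $\odecoeff_1^\veps(0)=c-3\rho\veps/16<c=\odecoeff_1(0)$); the paper's route is shorter and does not require checking the cubic asymptotics of $\sheavi$.
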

\begin{proof}
Recall that 
\begin{align*}
V(x)=\E_x\left(\int_0^\tau e^{-\delta s} \ell(X_s)ds\right) \quad \text{ and } 
\quad V^\veps(x)=\E_x\left(\int_0^{\tau^\veps} e^{-\delta s} \ell^\veps(X_s^\veps)ds\right),
\end{align*}
where $\tau=\inf\{t\geq 0 \colon X_t\in E^c\}$ and 
$\tau^\veps=\inf\{t\geq 0 \colon X_t^\veps\in E^c\}$.

It is readily checked that 
 $\sup_{y\in(-c/\rho,b-\veps)}\vert \odecoeff_1(y)-\odecoeff^\veps_1(y)\vert\leq 3\veps \rho/16$ 
and that $\vert \odecoeff_1(y_1)-\odecoeff_1(y_2)\vert \leq \rho\vert y_1-y_2\vert$
for all $y_1,y_2\in (-c/\rho,b-\veps)$. 
Hence  we get from \cite[Theorem 8, p.~111]{Kamke1964} that
\begin{align*}
\vert \phi^\veps_1(t,y)- \phi_1(t,y)\vert <\frac{3\veps}{16}\left(e^{\rho t}-1\right)
\end{align*}
for all $y\in(-c/\rho,b-\veps)$ and for all $t\in[0,\min\{\theta^\veps_{b-\veps},\tilde{\theta}_{b-\veps}\}]$, where 
\begin{align*}
\theta^\veps_{b-\veps}&=\inf\{t\geq 0 \colon \phi^\veps_1(t,y)\geq b-\veps\}\,,\\
\tilde{\theta}_{b-\veps}&=\inf\{t\geq 0 \colon \phi_1(t,y)\geq b-\veps\}\, ,\\
\tilde{\theta}_{b}&=\inf\{t\geq 0 \colon \phi_1(t,y)\geq b\}\, .\\
\end{align*}
Since $\odecoeff^\veps_1$ and $\odecoeff_1$ coincide on  
$(-c/\rho,b-\veps)\backslash(-\veps,\veps)$ and 
$\odecoeff^\veps_1\ge \odecoeff_1\ge0$, we can refine this estimate
to get 
\begin{align*}
\vert \phi^\veps_1(t,y)- \phi_1(t,y)\vert <\frac{3\veps}{16}\left(e^{\rho C(\veps)}-1\right),
\end{align*}
for all $t\in [0,\min\{\theta^\veps_{b-\veps},\tilde{\theta}_{b-\veps}\}]$, where 
$C(\veps)\in [0,\infty)$ is the time needed for the trajectory $\phi_1(\cdot,y)$ 
to cross $(-\veps,\veps)$. 
Note that $\odecoeff^\veps_1\geq\odecoeff_1\geq 0$
yields $\phi^\veps_1(t,y)\geq \phi_1(t,y)$ for 
all $t\in [0, \tilde{\theta}_{b-\veps}]$, and hence $\tilde{\theta}_{b-\veps}\ge \theta^\veps_{b-\veps}$.
For $t\geq\min\{\theta^\veps_{b-\veps},\tilde{\theta}_{b-\veps}\}=\theta^\veps_{b-\veps}$ 
we have by construction
that $\vert \phi^\veps_1(t,y)- \phi_1(t,y)\vert\leq \veps$. 
In total we get
\begin{align}\label{eq:pathestimate}
\vert \phi^\veps_1(t,y)- \phi_1(t,y)\vert\leq
\veps \max\left(1,\frac{3}{16}\left(e^{\rho C(\veps)}-1\right)\right).
\end{align}
Since $\lim_{\veps\to 0}C(\veps)\to 0$, it holds that
$\vert \phi^\veps_1(t,y)- \phi_1(t,y)\vert\leq \veps$ for sufficiently small $\veps>0$.

Recall that $T_1$ 
is the time of the first jump of $X$
conditional on $X_0=(1,y)$. 
Since the jump intensity is constant on $E_1$, 
$T_1$ is exponentially distributed with intensity $\lambda_N$.
Hence, we can write
\begin{align*}
V((1,y))&=\E_{(1,y)}\left(L(T_1,(1,y))+e^{-\delta T_1}V(X_{T_1})\right)\\
&=\int_0^\infty \lambda_Ne^{-\lambda_N s}\left(L(s,(1,y))+e^{-\delta s}\!\int_E V(x_1)Q(dx_1,\phi_1(s,y))\right)ds\\
&=\int_0^\infty \lambda_Ne^{-\lambda_N s}L(s,(1,y))ds+\int_0^\infty \!\lambda_Ne^{-(\lambda_N+\delta) s}\!\int_E V(x_1)Q(dx_1,\phi_1(s,y))\,ds\,,
\end{align*}
and analogously for $V^\veps$. We write $V((1,y))=V_1(y)$ and $V^\veps((1,y))=V^\veps_1(y)$ for $y\in E_1$.
Therefore,
\begin{align}
\nonumber
|V_1(y)-V^\veps_1(y)|
&\leq \int_0^\infty \lambda_Ne^{-\lambda_N s}|L(s,(1,y))-L^\veps(s,(1,y))|ds\\
\nonumber&+
\int_0^\infty \!\lambda_Ne^{-(\lambda_N+\delta) s}\left\vert\int_E V(x_1)Q(dx_1,\phi_1(s,y))-\int_E V^\veps(x_1)Q(dx_1,\phi^\veps_1(s,y))\right\vert\,ds\\
\nonumber&\leq \int_0^\infty \lambda_Ne^{-\lambda_N s}|L(s,(1,y))-L^\veps(s,(1,y))|ds\\
\nonumber&+
\int_0^\infty \!\lambda_Ne^{-(\lambda_N+\delta) s}\left\vert\int_E V(x_1)Q(dx_1,\phi_1(s,y))-\int_E V(x_1)Q(dx_1,\phi^\veps_1(s,y))\right\vert\,ds\\
\label{eq:VVeps}&+
\int_0^\infty \!\lambda_Ne^{-(\lambda_N+\delta) s}\left\vert\int_E V(x_1)Q(dx_1,\phi^\veps_1(s,y))-\int_E V^\veps(x_1)Q(dx_1,\phi^\veps_1(s,y))\right\vert\,ds\,.
\end{align}
For $x=(1,y)$ and $t\ge 0$ it holds that $L^\veps(s,x)=0$ for 
$s\le \theta^\veps_{b-2 \veps}$, and  
\begin{align*}
L^\veps(s,x)&=\int_0^s e^{-\delta r}\ell^\veps(\phi^\veps(r,x))dr
=c \int_0^s e^{-\delta r}\sheavi((\phi^\veps(r,x)-b+\veps)/\veps) dr
\le c \int_{\theta^\veps_{b-2\veps}}^s  e^{-\delta r} dr
\end{align*}
for $s\ge \theta^\veps_{b-2 \veps}$.
On the other hand, we have that, for 
$x=(1,y)$ and $s\ge 0$, $L(s,x)=0$ for 
$s\le \tilde{\theta}_b$, and  
\begin{align*}
L(t,x)&=
c \int_{\tilde{\theta}_b}^s  e^{-\delta r} dr
\end{align*}
for $s> \tilde{\theta}_b$.
Using $\phi^\veps_1(s,y)\geq \phi_1(s,y)$ for all $s\ge 0$,  we get
$\tilde{\theta}_b\ge \theta^\veps_{b-2 \veps}$, such that 
\begin{align*}
|L^\veps(s,(1,y))-L(s,(1,y))|
=L^\veps(s,(1,y))-L(s,(1,y))
\le c \int_{\theta^\veps_{b-2\veps}}^{\tilde{\theta}_b}  e^{-\delta r} dr
\end{align*}
for all $t\ge 0$. Hence,
\begin{align*}
\int_0^\infty \lambda_Ne^{-\lambda_N s}|L(s,(1,y))-L^\veps(s,(1,y))|ds
\le c\int_{\theta^\veps_{b-2\veps}}^{\tilde{\theta}_b}  e^{-\delta r} dr
\le c(\tilde{\theta}_b-\theta^\veps_{b-2\veps})\,.
\end{align*}
Now $\tilde{\theta}_b-\theta^\veps_{b-2\veps}\leq \frac{b-(b-2\veps-\veps C_1(\veps))}{c}=\veps\frac{2+ C_1(\veps)}{c}$, where
$C_1(\veps)=\max\left(1,\frac{3}{16}\left(e^{\rho C(\veps)}-1\right)\right)$,
see \eqref{eq:pathestimate}.
With this, the first term in \eqref{eq:VVeps} can be estimated by
\begin{align}\label{eq:rhs1}
\int_0^\infty \lambda_Ne^{-\lambda_N s}|L(s,(1,y))-L^\veps(s,(1,y))|ds
\le \veps(2+C_1(\veps))\,.
\end{align}
Next, observe that (we remind the reader that the states $x\in E$ are denoted by $x=(k,y)$, which is why in the following 
the terms $y_1, y_2$ are not to be confused with the integration variables 
$y_j$ used in and below \eqref{eqn:integral-de1}), 
\begin{align*}
\lefteqn{\left\vert\int_E V(x_1)Q(dx_1,(1,y_1))-\int_E V(x_1)Q(dx_1,(1,y_2))\right\vert}\\
&=
\left\vert\int_0^{y_1+c/\rho} V_1(y_1-z)f_Y(z)dz-\int_0^{y_2+c/\rho} V_1(y_2-z)f_Y(z)dz\right\vert\\
&\le 
\left\vert\int_{\min(y_1,y_2)+c/\rho}^{\max(y_1,y_2)+c/\rho} V_1(z)f_Y(z)dz\right\vert
\le \|V_1\|_\infty\|f_Y\|_\infty|y_1-y_2|.
\end{align*}
Combining this with \eqref{eq:pathestimate}, we can estimate the second term in \eqref{eq:VVeps} by
\begin{align}\label{eq:rhs2}
\lefteqn{\int_0^\infty \!\lambda_Ne^{-(\lambda_N+\delta) s}\left\vert\int_E V(x_1)Q(dx_1,\phi_1(s,y))-\int_E V^\veps(x_1)Q(dx_1,\phi^\veps_1(s,y))\right\vert\,ds}\\
&\le \frac{\lambda_N}{\lambda_N+\delta}
 \|V_1\|_\infty\|f_Y\|_\infty\sup_{s\ge 0}|\phi_1(s,y)-\phi^\veps_1(s,y)|
\le \frac{\lambda_N}{\lambda_N+\delta}
 \|V_1\|_\infty\|f_Y\|_\infty
\veps \max\left(1,\frac{3}{16}\left(e^{\rho C(\veps)}-1\right)\right).
\end{align}
Furthermore, since
\begin{align*}
\left\vert\int_E V(x_1)Q(dx_1,(1,y_2))-\int_E V^\veps(x_1)Q(dx_1,(1,y_2))\right\vert
\le 
\left\| V_1-V^\veps_1\right\|_\infty,
\end{align*}
the third term in \eqref{eq:VVeps} can be estimated as follows,
\begin{align}\label{eq:rhs3}
\int_0^\infty \!\lambda_Ne^{-(\lambda_N+\delta) s}\left\vert\int_E V(x_1)Q(dx_1,\phi^\veps_1(s,y))-\int_E V^\veps(x_1)Q(dx_1,\phi^\veps_1(s,y))\right\vert\,ds
\
\le \frac{\lambda_N}{\lambda_N+\delta}\|V_1-V^\veps_1\|_\infty\,.
\end{align}
Taking the supremum over $y\in E_1$ in \eqref{eq:VVeps} and 
using \eqref{eq:rhs1}, \eqref{eq:rhs2}, and \eqref{eq:rhs3} we obtain that
\begin{align*}
\|V_1-V^\veps_1\|_\infty\le C \varepsilon +\frac{\lambda_N}{\lambda_N+\delta}\|V_1-V^\veps_1\|_\infty
\end{align*}
for some constant $C$ and for sufficiently small $\veps$.
Thus, 
\begin{align*}
\frac{\delta}{\lambda_N+\delta}
\|V_1-V^\veps_1\|_\infty\le C \varepsilon,
\end{align*}
which completes the proof.
\end{proof}

\subsection{Numerical experiment}\label{subsec:numerical}

We now solve the example presented above numerically.
We set the following parameter values. The initial value of the PDMP $x_0=0$, the premium income rate $c=5$, the credit rate $\rho=0.05$, the intensity of the Poisson process $\lambda=4$, the jump size distribution is for all $x\in[0,\infty)$ given by $F_Y(x)=1-e^{-\alpha x}$ with $\alpha=1$, and the discount rate $\delta=0.02$. With this, the optimal dividend threshold according to \cite{DassiosEmbrechts1989} is $b=3.24289$. Furthermore, we set the smoothing parameter $\varepsilon=0.01$. For computing the  flow it is enough to solve the corresponding
ODE once and to store the solution for repeated use.

We implemented Monte Carlo (random), quasi-Monte Carlo with the Sobol' sequence (Sobol), and quasi-Monte Carlo with a scrambled version of the Halton sequence (scrambled Halton), where scrambling refers to a permutation of digits (see, e.g., \cite{owen98}).
The Sobol' point generator we used was taken from Frances Y.~Kuo's homepage \cite{kuohomepage} and is based on \cite{joe2008}.

The reference solution was calculated using Monte Carlo with
$M=5000\cdot 2^{10}=5120000$ sample paths
and $d=1024$, meaning that the maximum number of jumps we allow for is 512.
In our plots we show the results plotted over an increasing number of integration nodes $M\in\{50\cdot 2^j\colon 1\le j \le 16\}$.

Figure \ref{fig:stdder} shows the estimated standard deviation (root mean square error) of the estimation, which is calculated by using 50 repetitions with randomly shifted versions of our integration nodes.
\begin{figure}[ht]
\begin{center}
\includegraphics[width=\textwidth]{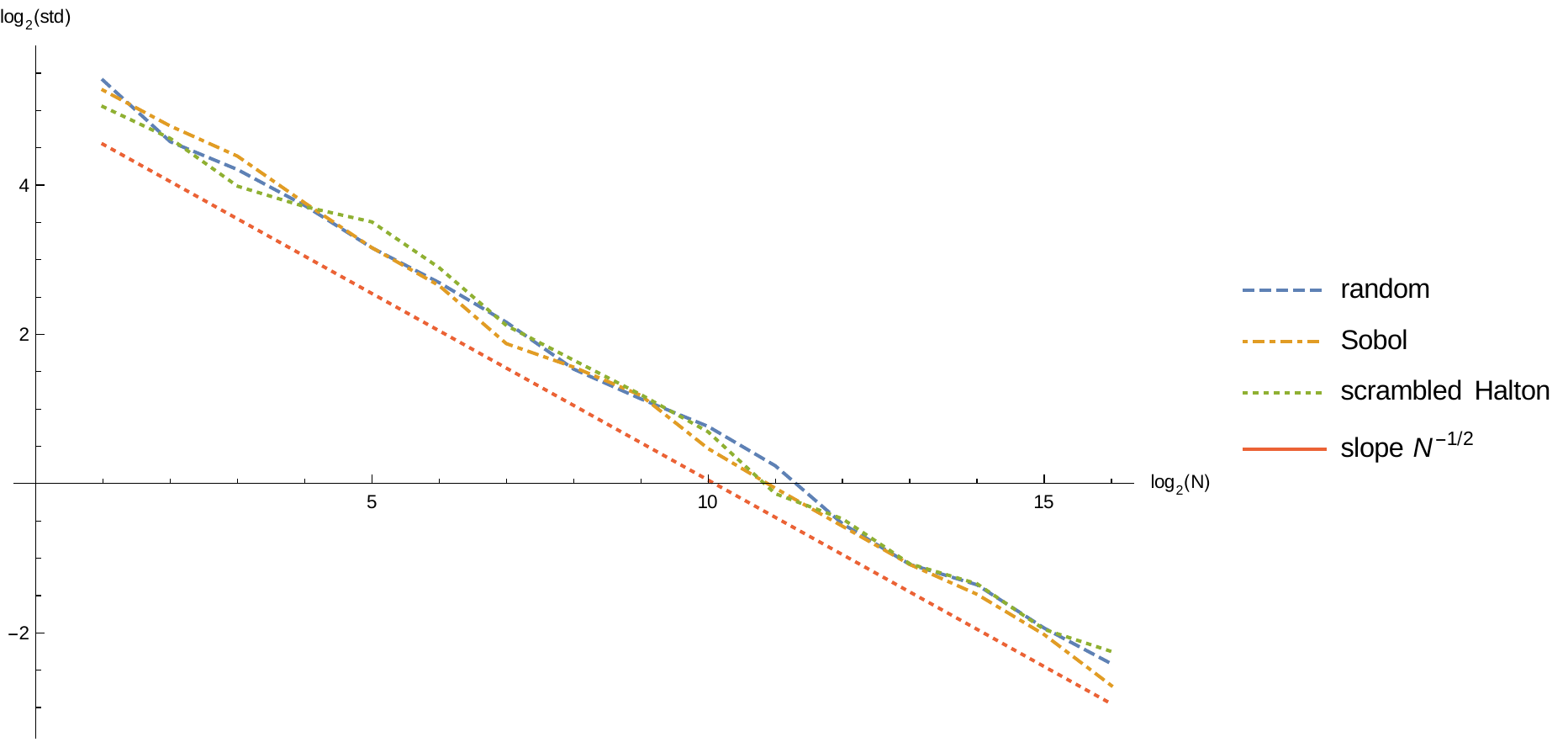}
\caption{The estimated standard deviation of the estimation.}\label{fig:stdder}
\end{center}
\end{figure}

\section*{Acknowledgements}

The authors would like to thank an anonymous referee for useful comments on how to improve the presentation of the results.





\vspace{2em}
\centerline{\underline{\hspace*{17cm}}}

\noindent Peter Kritzer\\
Johann Radon Institute for Computational and Applied Mathematics (RICAM), Austrian Academy of Sciences, Altenbergerstraße 69, 4040 Linz, Austria\\
peter.kritzer@ricam.oeaw.ac.at\\

 \noindent Gunther Leobacher \\
 Institute for Mathematics and Scientific Computing, University of Graz, Heinrichstra\ss{}e 36, 8010 Graz, Austria\\
 gunther.leobacher@uni-graz.at\\

\noindent Michaela Sz\"olgyenyi \\
Institute for Statistics, University of Klagenfurt, Universit\"atsstra\ss{}e 65-67, 9020 Klagenfurt, Austria and\\
Seminar for Applied Mathematics and RiskLab Switzerland, ETH Zurich, R\"amistrasse 101, 8092 Zurich, Switzerland\\
michaela.szoelgyenyi@aau.at\\

\noindent Stefan Thonhauser \Letter\\
Institute for Statistics, Graz University of Technology, Kopernikusgasse 24/III, 8010 Graz, Austria\\
stefan.thonhauser@math.tugraz.at

\end{document}